\pgfplotsset{compat=1.9}
\DeclarePairedDelimiter\floor{\lfloor}{\rfloor}
\newtheorem{theorem}{Theorem}[section]
\newtheorem{lemma}[theorem]{Lemma}
\theoremstyle{definition}
\newtheorem{definition}[theorem]{Definition}
\newtheorem{example}[theorem]{Example}
\newtheorem{corollary}[theorem]{Corollary}
\newtheorem{proposition}[theorem]{Proposition}
\newtheorem{conjecture}[theorem]{Conjecture}
\theoremstyle{remark}
\newtheorem{remark}[theorem]{Remark}
\numberwithin{equation}{section}
\begin{document}

\title{Coincidence and Self-coincidence of Many Maps between Digital Images}

\author{Muhammad Sirajo Abdullahi}

\curraddr[]{\textit{KMUTTFixed Point Research Laboratory},  KMUTT-Fixed Point Theory and Applications Research Group, SCL 802 Fixed Point Laboratory, Department of Mathematics, Faculty of Science, King Mongkut's University of Technology Thonburi (KMUTT), 126 Pracha-Uthit Road, Bang Mod, Thrung Khru, Bangkok 10140, Thailand}
\email{abdullahi.sirajo@udusok.edu.ng [M.S. Abdullahi]}
\email{poom.kumam@mail.kmutt.ac.th [P. Kumam]}

\author{Poom Kumam} 
\curraddr{Center of Excellence in \textit{Theoretical and Computational Science (TaCS-CoE)},  Science Laboratory Building, King Mongkut's University of Technology Thonburi (KMUTT), 126 Pracha-Uthit Road, Bang Mod, Thrung Khru, Bangkok 10140, Thailand}
\email{poom.kumam@mail.kmutt.ac.th [P. Kumam]}

\address[]{Department of Mathematics, Faculty of Science, Usmanu Danfodiyo University, Sokoto, Nigeria}
\email{abdullahi.sirajo@udusok.edu.ng [M.S. Abdullahi]}
\email{garba.isa@udusok.edu.ng [I.A. Garba]}



\author{Isah Abor Garba} 


\thanks{The first author was supported by the ``Petchra Pra Jom Klao Ph.D. Research Scholarship from King Mongkut's University of Technology Thonburi".}




\subjclass[2010] {Primary: 54H25 ; Secondary: 54C56, 68R10, 68U10}

\keywords{Digital topology, Coincidence point set, Self coincidence, Digital image, Digital continuous maps, Fixed points}

\begin{abstract}
The aim of this paper is to generalize some of the properties and results regarding both the coincidence point set and the common fixed point set of any two digitally continuous maps to the case of several (more than two) digitally continuous mappings. Moreover, we study how rigidity may affect these coincidence and homotopy coincidence point sets. Also, we investigate whether an established result by Staecker in Nielsen classical topology regarding the coincidence set for many maps still remains valid in the digital topological setting.
\end{abstract}

\maketitle

\section{Introduction}

Fixed point theory is a vital area in mathematics, it plays a fundamental role in many fields of mathematics from functional and mathematical analysis,  to pure and applied topology etc. In metric spaces, Banach fixed point theorem \cite{Banach} is the pioneer result in this direction, this theorem guarantees not only the existence but also the uniqueness of a fixed point of a certain self map $f$. Moreover, it provides us with a constructive method of finding such a fixed point. Many researches aimed at improving or generalizing  Banach fixed point theorem have been studied in the literature (see. \cite{AbdullahiAzam17A, AbdullahiPoom18, Azametal09, Nadler, SintuKumam11} and others).

Topologically, the main fixed point theorems are: the Brouwer fixed point theorem \cite{Brouwer11abbildung}, the Lefschetz fixed point theorem \cite{Lefschetz26intersections} and the Nielsen fixed point theory \cite{Nielsen27untersuchungen}. These theorems tell us some information regarding the existence of fixed points of some certain self map $f$, the estimation of the fixed points and/or feed us with some information regarding the fixed points of any map $g$ homotopic to $f$ in a topological space $X$. 

Recall that, a topological space $X$ has the fixed point property (FPP, for short) if every continuous function $f : X \longrightarrow X$ has a fixed point. The following is a similar definition that appeared in digital topological setting:
\begin{definition} \cite{Rosen86continuous}
A digital image $(X, \kappa)$ has the FPP if every $\kappa$-continuous $f : X \longrightarrow X$ has a fixed point.
\end{definition}

However, the FPP turns out to be very worthless. This is because one point image is the only digital image with the FPP as was corroborated below \cite{Boxeretal16digital}:

\begin{theorem}
A digital image $(X, \kappa)$ has the FPP if and only if $\#X = 1.$
\end{theorem}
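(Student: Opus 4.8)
The plan is to prove the two implications separately; the reverse implication is immediate, while the forward implication (in contrapositive form) is carried out by writing down an explicit fixed-point-free $\kappa$-continuous self-map.

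For the ``if'' direction, suppose $\#X = 1$, say $X = \{x_0\}$. The only function $f : X \longrightarrow X$ is then $f(x_0) = x_0$; since $X$ contains no pair of $\kappa$-adjacent points, $f$ is vacuously $\kappa$-continuous, and it fixes $x_0$. Hence $(X,\kappa)$ has the FPP.

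For the ``only if'' direction I would argue contrapositively: assuming $\#X \geq 2$, I will produce a $\kappa$-continuous $f : X \longrightarrow X$ with no fixed point. Fix $x_0 \in X$, and choose $b \in X \setminus \{x_0\}$, insisting that $b$ be $\kappa$-adjacent to $x_0$ whenever $x_0$ has a $\kappa$-neighbour (when $x_0$ is $\kappa$-isolated, any $b \neq x_0$ is allowed). Define
\[
 f(x_0) = b, \qquad f(x) = x_0 \ \text{ for every } x \in X \setminus \{x_0\}.
\]
Then $f$ has no fixed point, since $f(x_0) = b \neq x_0$ by the choice of $b$, and $f(x) = x_0 \neq x$ for each $x \neq x_0$.

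The only step that needs genuine verification --- and thus the ``hard'' part, modest as it is --- is the $\kappa$-continuity of $f$: one checks the adjacency condition on an arbitrary $\kappa$-adjacent pair $x, x' \in X$ (so $x \neq x'$). If neither of $x,x'$ is $x_0$, then $f(x) = f(x') = x_0$ and there is nothing to prove; if one of them, say $x$, equals $x_0$, then $x_0$ is not $\kappa$-isolated (it is adjacent to $x'$), so by construction $b = f(x_0) = f(x)$ is $\kappa$-adjacent to $x_0 = f(x')$. In all cases $f(x)$ and $f(x')$ are equal or $\kappa$-adjacent, so $f$ is $\kappa$-continuous. This exhibits a $\kappa$-continuous self-map of $X$ without a fixed point, so $(X,\kappa)$ fails the FPP, completing the contrapositive and hence the theorem. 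I would expect the bulk of the written argument to be exactly this short case analysis, with the choice of $b$ (the isolated versus non-isolated $x_0$ distinction) being the one subtlety worth flagging.
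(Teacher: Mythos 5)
Your proof is correct: the constant-image-with-one-exception map (send every point to $x_0$ and send $x_0$ to a neighbour $b$ when one exists, to any other point otherwise) is fixed-point-free, and your adjacency case analysis verifies continuity in the sense of the Boxer characterization quoted in the paper; the isolated-versus-non-isolated distinction you flag is exactly what makes the argument work without any connectedness assumption. Note that this paper does not prove the theorem itself but cites it from \cite{Boxeretal16digital}, and your construction is essentially the standard argument given there, so there is no substantive divergence to report.
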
 

On the other hand, with digital topology we have answers to questions of how we can apply topological concepts to a binary digital image in a useful and meaningful manner, and also to what extend we can be able do that \cite{Kongetal92guest}. This was first studied by Azriel Rosenfeld in the early 1970s \cite{Rosen79digital}. Digital topology usually provides the theoretical foundations for image processing operations, which include image thinning, image segmentation, boundary detection, contour filling, computer graphics etc (see. \cite{Bertrand94simple, Han05nonproduct, KongRosen96topological}).

The motivation of this research article was born during the Nielsen conference in Kortrijk. The study of coincidence points for many maps has been extensively studied in the literature by many researchers where they considered different kind of spaces or functions. Some of the papers in this direction include (\cite{Gonetal10coincidence, GonRandall17coincidence, MonisWong17obstruction, Staecker11nielsen} and references therein).

The object of this paper is to generalize and improve some properties and results presented in \cite{Abdullahietal19coincidence} regarding both the coincidence point set and the common fixed point set of any two digitally continuous maps to the case of several (more than two) mappings. Moreover, we study how rigidity may affect these coincidence and homotopy coincidence point sets and other related invariants/notions. We also investigate whether a known result as was established by Staecker in \cite{Staecker11nielsen} in the classical Nielsen coincidence theory still remains valid in the digital topological setting. Another generalization we consider, is extending most of the results in \cite{Abdullahietal19coincidence} from self mappings to non-self mappings. Fortunately, as we have anticipated, we were able to show that the non-self mappings may provide us something (results) different from the results already obtained in \cite{Abdullahietal19coincidence} (related to self mappings), some examples were also polished in this regard.

This manuscript is organized into $4$ sections excluding the introduction and conclusion. In Section 2 we review some basic backgrounds needed for this study. In Section 3, we study the coincidence point spectrum for several maps and give some of its characterizations with some illustrative examples. In Section 4, we introduce and study homotopy coincidence point spectrum for several maps between digital images.  Finally, we introduced a special case of the minimum coincidence number called the self coincidence number, we end the section with some of its characterizations.

\section{Preliminaries}

We declare that much of this part is either quoted or paraphrased from the authors' previous article \cite{Abdullahietal19coincidence}. 

Throughout this manuscript, we denote $\mathbb{N}$ and $\mathbb{Z}$ to be the sets of natural numbers and integers respectively. By $id$ and $c$ we mean the identity map (i.e. $id(x) = x$ for all $x \in X$) and the constant map (i.e. $c(x) = x_0$ for all $x \in X$ with $x_0 \in X$ fixed) respectively. Also, we will denote  the number of elements or the cardinality of a set $X$ simply by the notation $\#X.$

A digital image is mathematically a pair $(X, \kappa),$ where $X \subset \mathbb{Z}^n$ for some $n \in \mathbb{N}$ and $\kappa$ is a symmetric and antireflexive relation on $X$ called the adjacency. Moreover, a digital image $(X, \kappa)$ can be viewed as a graph for which $X$ is the vertex set and $\kappa$ the edge set. We usually consider $X$ to be finite set and the adjacency to represent some sort of ``closeness" of the adjacent points in $\mathbb{Z}^n$. 

\subsection{Adjacencies and Neighbourhoods}

We write $x \leftrightarrow_{\kappa} y$ to indicate that $x$ and $y$ are $\kappa$-adjacent and use the notation $x \Leftrightarrow_{\kappa} y$ to indicate that $x$ and $y$ are $\kappa$-adjacent or  equal. Or simply, we use $x \leftrightarrow y$ and $x \Leftrightarrow y$ whenever the adjacency $\kappa$ is understood or unnecessary to mention.

In this paper, we will use the following type of adjacency. For $t \in \mathbb{N}$ with $1 \leq t \leq n$, any 2 (two) points $p = (p_1, p_2, \ldots , p_n)$ and $q = (q_1, q_2, \ldots , q_n)$ in $\mathbb{Z}^n$ (with $p \not= q$) are said to be $\kappa(t, n)$ or $\kappa$-adjacent if at most $t$ of their coordinates differs by $\pm 1,$ and all others coincide. This is commonly called the $c_t$ adjacency, as defined in \cite{BoxerSta19fixed}. Usually, the number of adjacent points (a point can have) is used to denote the $c_t$-adjacencies. For instance, in $1$ dimensional digital image we have $c_1$-adjacency as $2$-adjacency. In dimension 2, we have  $c_1$-adjacency and  $c_2$-adjacency as $4$-adjacency and $8$-adjacency respectively. While in dimension 3, we have  $c_1$-adjacency, $c_2$-adjacency and  $c_3$-adjacency as $6$-adjacency, $18$-adjacency and $26$-adjacency respectively.

Following the Rosenfeld graphical approach, we use the notions of $\kappa$-adjacency relations on $\mathbb{Z}^n$ and a digital $\kappa$-neighborhood as mostly used in the literature. More precisely, the $\kappa$-adjacency relations defined above. Now, we denote and define a digital $\kappa$-neighborhood of a point $p$ in $\mathbb{Z}^n$ as \cite{Rosen79digital}:
\[N_{\kappa}(p) := \{q \, \mid \, q \leftrightarrow_{\kappa} p\}.\]
Sometimes, the following notation is often use to denote a kind of neighborhood that includes the focal point i.e, the digital $\kappa$-neighborhood of $p$ in $\mathbb{Z}^n$ \cite{KongRosen96topological}.
\[N^{\ast}_{\kappa}(p):= \{q \, \mid \, q \Leftrightarrow_{\kappa} p\}.\]

A ``digital interval" is defined as the set $[a, b]_{\mathbb{Z}} = \{n \in \mathbb{Z} \, | \, a \leq n \leq b\}$ together with the $2$-adjacency relation, where $a, b \in \mathbb{Z}$ such that $a \lneq b$ \cite{Boxer94digitally}. Two subsets $(A, \kappa)$ and $(B, \kappa)$ of $(X, \kappa)$ are said to be ``$\kappa$-adjacent" to each other if $A \cap B = \emptyset$ and there exist points $a \in A$ and $b \in B$ such that $a$ and $b$ are $\kappa$-adjacent to each other. A subset $(A, \kappa)$ of $(X, \kappa)$ is called ``$\kappa$-connected" if it is not a union of two disjoint non-empty sets that are not $\kappa$-adjacent to each other, or otherwise $(A, \kappa)$ is called ``$\kappa$-disconnected" \cite{Herman93oriented}. A $\kappa$-component of $x \in X$ is the maximal $\kappa$-connected subset of $(X, \kappa)$ containing the point $x$.

\begin{definition}
An image $(X, \kappa)$  is called ``totally $\kappa$-disconnected" or simply ``totally disconnected" if the $\kappa$-connected component of any point $x\in X$ is the singleton set $\{x\}$.
\end{definition}

\begin{definition}\cite{BoxerSta19fixed}
An $n$-dimensional digital cycle $C_n = \{x_0, x_1, \ldots, x_{n-1}\}$ is an $n$-point digital image where each point $x_i$ is only adjacent to $x_{i-1}$ and $x_{i+1}$.
\end{definition}

\subsection{Digital Continuity and Homotopy}

\begin{definition} \cite{Rosen86continuous}
Let $(X, \kappa_1)$ and $(Y, \kappa_2)$ be digital images. A function $f : X \longrightarrow Y$ is said to be $(\kappa_1, \kappa_2)$-continuous, if for every $\kappa_1$-connected subset $A$ of $X, f(A)$ is a $\kappa_2$-connected subset of $Y.$
\end{definition}

The function $f$ is called digitally continuous whenever $\kappa_1$ and $\kappa_2$ are understood. If $\kappa_1 = \kappa_2 = \kappa$, we say that a function is $\kappa$-continuous to abbreviate $(\kappa, \kappa)$-continuous.

\begin{theorem} \cite{Boxer99classical}
Let $(X, \kappa_1)$ and $(Y, \kappa_2)$ be digital images. Then a function $f : X \longrightarrow Y$ is called $(\kappa_1, \kappa_2)$-continuous if and only if for every $x, y \in X, f(x) \Leftrightarrow_{\kappa_2} f(y)$ whenever $x \leftrightarrow_{\kappa_1} y$.
\end{theorem}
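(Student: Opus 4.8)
The plan is to prove both implications directly from the definition of $\kappa$-continuity (preservation of $\kappa$-connectedness) together with the non-separation definition of $\kappa$-connectedness, without passing through path-connectedness.

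For the forward direction, assume $f$ is $(\kappa_1,\kappa_2)$-continuous and let $x \leftrightarrow_{\kappa_1} y$. The first step is to note that $A=\{x,y\}$ is a $\kappa_1$-connected subset of $X$: the only way to write it as a union of two disjoint nonempty subsets is $\{x\}\cup\{y\}$, and these are $\kappa_1$-adjacent by hypothesis, so no separation exists. By continuity, $f(A)=\{f(x),f(y)\}$ is $\kappa_2$-connected. Since this set has at most two points, either $f(x)=f(y)$, or $f(x)\neq f(y)$ and then $\{f(x)\}\cup\{f(y)\}$ must fail to separate $f(A)$, forcing $f(x)\leftrightarrow_{\kappa_2}f(y)$. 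In both cases $f(x)\Leftrightarrow_{\kappa_2}f(y)$.

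For the converse, assume the adjacency condition and let $A\subseteq X$ be $\kappa_1$-connected; I argue by contradiction that $f(A)$ is $\kappa_2$-connected. If not, write $f(A)=U\cup V$ with $U,V$ nonempty, disjoint, and not $\kappa_2$-adjacent. Set $A_U=A\cap f^{-1}(U)$ and $A_V=A\cap f^{-1}(V)$; these are nonempty, disjoint (because $U\cap V=\emptyset$), and their union is $A$. The key step is to check that $A_U$ and $A_V$ are not $\kappa_1$-adjacent: if $a\in A_U$ and $b\in A_V$ were $\kappa_1$-adjacent, the hypothesis would give $f(a)\Leftrightarrow_{\kappa_2}f(b)$; but $f(a)\in U$ and $f(b)\in V$ are distinct, so in fact $f(a)\leftrightarrow_{\kappa_2}f(b)$, making $U$ and $V$ $\kappa_2$-adjacent, a contradiction. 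Hence $A_U,A_V$ witness a separation of $A$, contradicting $\kappa_1$-connectedness of $A$. Therefore $f(A)$ is $\kappa_2$-connected for every $\kappa_1$-connected $A$, i.e. $f$ is $(\kappa_1,\kappa_2)$-continuous.

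The argument is essentially routine; the only place needing care is the converse, where one must manipulate the non-separation definition of $\kappa$-connectedness rather than its (equivalent, for digital images) path-connected formulation. The use of $\Leftrightarrow_{\kappa_2}$, i.e. \emph{adjacent or equal}, is exactly what makes the pullback argument close, since the collapsing of points under $f$ is then harmless.
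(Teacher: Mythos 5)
Your proof is correct: both directions follow cleanly from the non-separation definition of $\kappa$-connectedness, and the only delicate points (using antireflexivity so that $x\leftrightarrow_{\kappa_1}y$ forces $x\neq y$, and noting that in the converse the distinctness of $f(a)\in U$ and $f(b)\in V$ upgrades $\Leftrightarrow_{\kappa_2}$ to $\leftrightarrow_{\kappa_2}$) are handled explicitly. Note that the paper itself gives no proof --- it simply cites this characterization from Boxer --- and your argument is essentially the standard one used there, so there is nothing to reconcile.
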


\begin{definition} \cite{Khalimsky87motion}
A digital $\kappa$-path in a digital image $(X, \kappa)$ is a $(2, \kappa)$-continuous function $\gamma : \lbrack 0, m \rbrack_{\mathbb{Z}} \longrightarrow X$. Further, $\gamma$ is called a digital $\kappa$-loop if $\gamma(0) = \gamma(m),$ and the point $p = \gamma(0)$ is the base point of the loop $\gamma.$ Moreover, $\gamma$ is called a trivial loop if $\gamma$ is a constant function.
\end{definition}

\begin{definition} \cite{Boxer94digitally}
A function $f : X \longrightarrow Y$ between digital images $(X, \kappa_1)$ and $(Y, \kappa_2)$ is called an isomorphism if $f$ is a digitally continuous bijection such that $f^{-1}$ is digitally continuous.
\end{definition}

\begin{definition} \cite{Boxer99classical}
Let $(X, \kappa_1)$ and $(Y, \kappa_2)$ be digital images. Suppose that $f, g : X \longrightarrow Y$ are $(\kappa_1, \kappa_2)$-continuous functions, there is a positive integer $m$ and a function $H : X \times \lbrack 0, m \rbrack_{\mathbb{Z}} \longrightarrow Y$ such that:
\begin{enumerate} [label=\arabic*. ]
		\item For all $x \in X, H(x, 0) = f(x)$ and $H(x, m) = g(x)$;
		\item For all $x \in X,$ the induced function $H_x : \lbrack 0, m \rbrack_{\mathbb{Z}} \longrightarrow Y$ defined by 
		\[H_x(t) = H(x, t), \mbox{ for all } t \in \lbrack 0, m \rbrack_{\mathbb{Z}} \]
		is $(c_1, \kappa_1)$-continuous. That is, $H_x(t)$ is a $\kappa$-path in $Y$;
		\item For all $t \in \lbrack 0, m \rbrack_{\mathbb{Z}},$ the induced function $H_t : X \longrightarrow Y$ defined by
		\[H_t(x) = H(x, t), \mbox{ for all } x \in X\]
		is $(\kappa_1, \kappa_2)$-continuous. 
\end{enumerate}
Then $H$ is a digital homotopy (or $\kappa$-homotopy) between $f$ and $g$. Thus, the functions $f$ and $g$ are said to be digitally homotopic (or $\kappa$-homotopic) and denoted by $f \simeq g.$
\end{definition}
Note that if $m = 1,$ then $f$ and $g$ are said to be $\kappa$-homotopic in one step.

\begin{definition} \cite{Khalimsky87motion}
A continuous function $f : X \longrightarrow Y$ is called digitally nullhomotopic in $Y$ if $f$ is digitally homotopic to a constant function $c$. Moreover, a digital image $(X, \kappa)$ is said to be digitally contractible (or $\kappa$-contractible) if its identity map $id$ is digitally nullhomotopic.
\end{definition}

\begin{definition} \cite{BoxerSta19fixed, Haarmannetal15homotopy}
A function $f : X \longrightarrow Y$ is called rigid if no continuous map is homotopic to $f$ except $f$ itself. Moreover, when the identity map $id : X \longrightarrow X$ is rigid, we say that $X$ is rigid.
\end{definition}

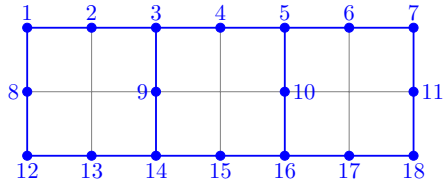
\begin{figure}[h]
\centering
\resizebox{6cm}{2.5cm}{%
		\begin{tikzpicture}
		\draw[step=1cm,gray,very thin] (0,0) grid (6,2);
		\draw [blue, thick] (0,0) -- (0,2);
		\draw [blue, thick] (0,0) -- (6,0);
		\draw [blue, thick] (0,2) -- (6,2);
		\draw [blue, thick] (4,0) -- (4,2);
		\draw [blue, thick] (2,0) -- (2,2);
		\draw [blue, thick] (6,0) -- (6,2);
		\filldraw[blue] (0,0) circle (2pt) node[below]{12};
		\filldraw[blue] (0,1) circle (2pt) node[left]{8};
		\filldraw[blue] (0,2) circle (2pt) node[above]{1};
		\filldraw[blue] (1,0) circle (2pt) node[below]{13};
		\filldraw[blue] (2,0) circle (2pt) node[below]{14};
		\filldraw[blue] (3,0) circle (2pt) node[below]{15};
		\filldraw[blue] (4,0) circle (2pt) node[below]{16};
		\filldraw[blue] (5,0) circle (2pt) node[below]{17};
		\filldraw[blue] (6,0) circle (2pt) node[below]{18};
		\filldraw[blue] (1,2) circle (2pt) node[above]{2};
		\filldraw[blue] (2,2) circle (2pt) node[above]{3};
		\filldraw[blue] (3,2) circle (2pt) node[above]{4};
		\filldraw[blue] (4,2) circle (2pt) node[above]{5};
		\filldraw[blue] (5,2) circle (2pt) node[above]{6};
		\filldraw[blue] (6,2) circle (2pt) node[above]{7};
		\filldraw[blue] (2,1) circle (2pt) node[left]{9};
		\filldraw[blue] (4,1) circle (2pt) node[right]{10};
		\filldraw[blue] (6,1) circle (2pt) node[right]{11};
		\end{tikzpicture}}
	\caption{A 2-dimensional digital image with a 4-adjacency relation.}
	\label{fig1}
\end{figure}

\begin{example} \cite{BoxerSta19fixed}
Let $X = (\lbrack 0, 6 \rbrack_{\mathbb{Z}} \times \{0,2\}) \cup \{(0,1),(2,1),(4,1),(6,1)\}.$ Then $X$ with a $4$-adjacency is a digital image (see Fig. \ref{fig1}). Moreover, $X$ is rigid.
\end{example}

\section{Coincidence Point Spectrum}

In the sequel, we will consider the functions $f_1, \ldots, f_i : X \longrightarrow Y$ to be continuous maps between connected digital images $X$ and $Y$, unless stated otherwise. Now, let's consider the set $C(f_1,\ldots, f_i),$ called the ``coincidence point set" of the maps $f_1, \ldots, f_i.$
\[C(f_1, \ldots, f_i) := \{x \in X \, | \, f_1(x) = \cdots = f_i(x)\}.\]

Whenever we change the maps $f_1, \ldots, f_i$ by a homotopy, we expect the size and shape of the set $C(f_1, \ldots, f_i)$ to vary greatly. So following \cite{Abdullahietal19coincidence}, in this paper, we are also interested in the size of the set $C(f_1, \ldots, f_i).$ One possible tool used in measuring the set $C(f_1, \ldots, f_i)$ is the ``minimum number of coincidence points" (i.e. $MC(f_1, \ldots, f_i)$) which we define as: 
\[MC(f_1, \ldots, f_i) := \min \{\# C(g_1, \ldots, g_i) \, | \, g_i \simeq f_i\}.\]

In the following, we present a generalization of Theorem 3.1 in \cite{Abdullahietal19coincidence} to the case of several maps. This theorem asserts that isomorphism always preserved the number of coincidence points.

\begin{theorem}
Let $X,Y$ be isomorphic digital images. Suppose that $f_1, \ldots, f_i : X \longrightarrow X$ are continuous mappings, for $i \geq 3.$ Then there are continuous mappings $g_1, \ldots, g_i : Y \longrightarrow Y$ such that $\# C(f_1, \ldots, f_i) \, \textup{=} \, \# C(g_1, \ldots, g_i).$
\end{theorem}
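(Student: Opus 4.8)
The plan is a transport-of-structure (conjugation) argument, exactly parallel to the two-map case (Theorem 3.1 of \cite{Abdullahietal19coincidence}), but carried out uniformly in $i$. Since $X$ and $Y$ are isomorphic, fix a digital isomorphism $\phi : X \longrightarrow Y$; by definition $\phi$ is a digitally continuous bijection whose inverse $\phi^{-1} : Y \longrightarrow X$ is also digitally continuous. For each $j \in \{1, \ldots, i\}$ I would set
\[ g_j := \phi \circ f_j \circ \phi^{-1} : Y \longrightarrow Y. \]
The first thing to check is that each $g_j$ is digitally continuous; this is immediate from the definition of digital continuity, since a composition of maps each of which sends $\kappa$-connected subsets to $\kappa$-connected subsets again has this property, and $g_j$ is such a composition of three maps. (If one wishes to track adjacencies explicitly, with $X$ carrying $\kappa_X$ and $Y$ carrying $\kappa_Y$, the maps $g_j$ are $\kappa_Y$-continuous precisely because $\phi$ is a $(\kappa_X,\kappa_Y)$-isomorphism; this is the only place the adjacency bookkeeping enters.)

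Next I would show that $\phi$ carries $C(f_1,\ldots,f_i)$ bijectively onto $C(g_1,\ldots,g_i)$. The key computation is that for $y \in Y$, writing $x = \phi^{-1}(y)$,
\[ g_1(y) = \cdots = g_i(y) \iff \phi(f_1(x)) = \cdots = \phi(f_i(x)) \iff f_1(x) = \cdots = f_i(x), \]
where the first equivalence is just the definition of the $g_j$ together with $\phi^{-1}(y) = x$, and the second uses that $\phi$ is injective, so it both preserves and reflects equality. Hence $y \in C(g_1,\ldots,g_i)$ if and only if $\phi^{-1}(y) \in C(f_1,\ldots,f_i)$, i.e. $\phi$ restricts to a map $C(f_1,\ldots,f_i) \longrightarrow C(g_1,\ldots,g_i)$ which is injective (being a restriction of the bijection $\phi$) and surjective (by the displayed equivalence applied to arbitrary $y$ in the target). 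Therefore $\# C(f_1,\ldots,f_i) = \# C(g_1,\ldots,g_i)$, as claimed.

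There is essentially no hard step: the only points that need to be recorded are (i) that conjugation by a digital isomorphism preserves digital continuity, which is the evident "chain rule" for images of connected sets, and (ii) that the hypothesis $i \geq 3$ plays no distinguished role, the argument being identical for any number of maps. The proof also makes clear the natural strengthening that $\phi$ induces a cardinality-preserving bijection on the coincidence sets themselves, not merely on their sizes.
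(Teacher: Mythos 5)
Your proposal is correct and follows essentially the same route as the paper: conjugation of each $f_j$ by the isomorphism, $g_j = \phi \circ f_j \circ \phi^{-1}$, and transport of the coincidence set by $\phi$. The only difference is presentational --- you establish the bijection via a single biconditional, while the paper argues the two inclusions $\Phi(A) \subseteq B$ and $\Phi^{-1}(B) \subseteq A$ separately --- and your version also makes the continuity check and the irrelevance of $i \geq 3$ explicit, which the paper leaves tacit.
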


\begin{proof}
By hypothesis $\Phi : X \longrightarrow Y$ is an isomorphism. Let $A = C(f_1, \ldots, f_i).$ Then, $\#\Phi(A) = \#A$ since $\Phi$ is one-to-one.
Let's define $g_1, \ldots, g_i : Y \longrightarrow Y$ as $g_i = \Phi \circ f_i \circ \Phi^{-1},$ for $i \geq 3.$ Now, for an arbitrary $y_0 \in \Phi(A),$ let $x_0 = \Phi^{-1}(y_0).$ Then
\begin{equation}
\begin{split}
g_1(y_0) &= \Phi \circ f_1 \circ \Phi^{-1}(y_0)\\
	&= \Phi \circ f_1(x_0)\\
	& \qquad \vdots\\
	&= \Phi \circ f_i(x_0)\\
	&= \Phi \circ f_i \circ \Phi^{-1}(y_0)\\
	&= g_i(y_0).
\end{split}
\end{equation}
Let $B = C(g_1, \ldots, g_i)$, then we obtain
\[\Phi(A) \subseteq B,\]
thus \[\#A \leq \#B.\]
Similarly, for $y_1 \in B$ taken arbitrary and $x_1 = \Phi^{-1}(y_1).$ We have
\begin{equation}
\begin{split}
f_1(y_1) &= \Phi^{-1} \circ g_1 \circ \Phi(y_1)\\
	&= \Phi^{-1} \circ g_1(x_1)\\
	& \qquad \vdots\\
	&= \Phi^{-1} \circ g_i(x_1)\\
	&= \Phi^{-1} \circ g_i \circ \Phi(y_1)\\
	&= f_i(y_1).
	\end{split}
\end{equation}
It implies that \[\Phi^{-1}(B) \subseteq A,\]
which further implies \[\#B \leq \#A.\]
Hence \[\# C(f_1, \ldots, f_i) \, \textup{=} \, \# C(g_1, \ldots, g_i).\]
\qed
\renewcommand{\qed}{} 
\end{proof}

In what follows, we will recall a classical topological concept. Notably, the following result which appeared in \cite{Staecker11nielsen} and asserts that the minimal number of coincidence points between compact manifolds of
the same dimension is always zero.

\begin{theorem}\cite{Staecker11nielsen} \label{th6}
If $X$ and $Y$ are compact manifolds of the same dimension, and $f_1, \ldots, f_i : X \longrightarrow Y$ are maps with $i > 2,$ then these maps can be changed by homotopy so that their coincidence set is empty.
\end{theorem}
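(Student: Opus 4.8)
The plan is to package the simultaneous coincidence condition as a single map into a product and then kill the relevant preimage by a transversality (general position) argument that works precisely because $i>2$. I would begin by reducing to the smooth category: replace each $f_k$ by a smooth map homotopic to it; since homotopy is transitive and it is only a map homotopic to $f_k$ that we need to exhibit, this is harmless (and a simplicial version of the whole argument is available if one prefers not to assume a smooth structure). Set $n=\dim X=\dim Y$, form the product map $F=(f_1,\dots,f_i)\colon X\longrightarrow Y^{\,i}$, and let
\[
\Delta_i=\{(y,y,\dots,y)\mid y\in Y\}\subset Y^{\,i}
\]
be the thin diagonal. Unwinding the definitions, $x$ is a common coincidence point of $f_1,\dots,f_i$ exactly when $F(x)\in\Delta_i$, so $C(f_1,\dots,f_i)=F^{-1}(\Delta_i)$.

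The one geometric input is a codimension count. The diagonal embedding $Y\hookrightarrow Y^{\,i}$ is a smooth embedding, so $\Delta_i$ is a closed submanifold of $Y^{\,i}$ diffeomorphic to $Y$; hence $\dim\Delta_i=n$ and $\operatorname{codim}_{Y^{i}}\Delta_i=ni-n=n(i-1)$. Now I would invoke the Thom transversality theorem (applicable because $X$ is compact and $\Delta_i$ is a closed submanifold of the compact manifold $Y^{\,i}$): $F$ is homotopic to a smooth map $G\colon X\longrightarrow Y^{\,i}$ transverse to $\Delta_i$.

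The argument then closes by dimension alone. Transversality of $G$ to $\Delta_i$ makes $G^{-1}(\Delta_i)$ a submanifold of $X$ whose codimension equals $\operatorname{codim}\Delta_i=n(i-1)$, i.e.\ of dimension $n-n(i-1)=n(2-i)$. Since $i>2$ this is strictly negative, which forces $G^{-1}(\Delta_i)=\emptyset$. Finally I would set $g_k=\pi_k\circ G$, where $\pi_k\colon Y^{\,i}\to Y$ is the $k$-th projection; composing a homotopy $F\simeq G$ with $\pi_k$ shows $g_k\simeq\pi_k\circ F=f_k$, while $G=(g_1,\dots,g_i)$ gives $C(g_1,\dots,g_i)=G^{-1}(\Delta_i)=\emptyset$, which is the assertion.

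I do not expect a serious obstacle --- this is essentially a general-position statement --- but two points need care. First, one must check that perturbing the \emph{product} map $F$ genuinely amounts to simultaneous homotopies of the individual $f_k$; this is automatic, since any map into $Y^{\,i}$ is its tuple of component maps and post-composition with $\pi_k$ converts a homotopy of the tuple into a homotopy of the $k$-th component. Second, one should be explicit about the category in which transversality is used (smooth approximation of maps and homotopies, or a simplicial general-position argument against a triangulation of $\Delta_i$), and, if the manifolds are permitted to have boundary, about the mild extra bookkeeping that entails. It is worth highlighting why the hypothesis $i>2$ is essential: for $i=2$ the diagonal has codimension exactly $n=\dim X$, so $G^{-1}(\Delta_2)$ is generically a finite set of points carrying a coincidence index that can obstruct removal --- the very phenomenon that makes two-map coincidence theory nontrivial and that evaporates once $i\ge 3$.
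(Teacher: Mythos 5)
This theorem is quoted from \cite{Staecker11nielsen} and the present paper gives no proof of it (it is cited precisely so the authors can test its digital analogue), so the only comparison is with the standard argument, which is exactly what you give: the coincidence set is $F^{-1}(\Delta_i)$ for $F=(f_1,\dots,f_i)\colon X\to Y^i$, the thin diagonal has codimension $n(i-1)>n=\dim X$ once $i>2$, and a transverse (general-position) perturbation, pushed back through the projections, yields componentwise homotopies with empty coincidence set. Your proof is correct as written for the smooth (or PL) setting, and the two caveats you flag --- working in a category where transversality is available, and the bookkeeping for boundary --- are the right ones; nothing further is missing.
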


So, we believe that it would be interesting to investigate whether or not Theorem \ref{th6} holds in the setting of digital spaces.

\begin{proposition}
Let $X$ and $Y$ be connected images with $\#Y > 1$. Then it is always possible to obtain $C(f_1, \ldots, f_i) = \emptyset$, for some continuous maps $f_1, \ldots, f_i : X \longrightarrow Y$ with $i \geq 2.$
\end{proposition}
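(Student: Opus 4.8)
The statement only asks for \emph{existence} of some continuous maps $f_1,\ldots,f_i : X \longrightarrow Y$ whose coincidence set is empty, so the plan is the cheapest possible: exhibit an explicit family. Since $\#Y > 1$, pick two distinct points $y_0, y_1 \in Y$. I would set $f_1 = f_2 = \cdots = f_{i-1}$ to be the constant map $c_{y_0}$ sending every $x \in X$ to $y_0$, and set $f_i$ to be the constant map $c_{y_1}$ sending every $x$ to $y_1$. Constant maps are digitally continuous (the image of any $\kappa$-connected set is a single point, which is trivially $\kappa$-connected), so each $f_j$ is a legitimate continuous map between the images.

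Next I would verify the coincidence set is empty. For any $x \in X$ we have $f_1(x) = \cdots = f_{i-1}(x) = y_0$ but $f_i(x) = y_1 \neq y_0$, so the condition $f_1(x) = \cdots = f_i(x)$ fails for every $x$; hence $C(f_1,\ldots,f_i) = \emptyset$. This works for any $i \geq 2$, and it uses only that $X$ is nonempty (guaranteed since $X$ is a connected image) and that $Y$ has at least two points.

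There is essentially no obstacle here — the whole point of the proposition is the contrast with Theorem \ref{th6} and the subtlety lies in the \emph{interpretation}, not the proof. Unlike the classical Nielsen-theoretic statement, which requires a genuine homotopy-theoretic argument to push the coincidence set off the manifold, in the digital setting we do not even need to invoke $MC(f_1,\ldots,f_i)$ or any homotopy: we can simply \emph{choose} the maps at the outset. So the one thing worth flagging is that the proposition is weaker than a digital analogue of Theorem \ref{th6} would be (which would assert $MC(f_1,\ldots,f_i) = 0$ for \emph{arbitrary} given maps), and I would expect the surrounding discussion, or a companion example, to point out that such a strong analogue in fact fails for digital images. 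For the proposition as literally stated, the constant-map construction is complete.
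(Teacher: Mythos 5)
Your construction is exactly the paper's: pick two distinct points $y_0 \neq y_1$ in $Y$, take $i-1$ copies of the constant map at $y_0$ and one constant map at $y_1$, and observe that the coincidence set is empty since the two constant values never agree. The proposal is correct and matches the paper's proof essentially verbatim.
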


\begin{proof}
Let $y_0, y_1 \in Y$ with $y_0 \not= y_1$ since $\#Y > 1.$ Define $f_1, f_2 : X \longrightarrow Y$ as $f_1(x)=y_0$ and $f_2(x)=y_1$ for all $x \in X$ respectively. Then we can see that \[C(f_1, f_2) = C(f_1, f_1, f_2) = C(f_1, \ldots, f_1, f_2) = \emptyset.\]
This implies that whenever we have more than one point in the co-domain, we can define several constant maps to make them coincidence point free.
\qed
\renewcommand{\qed}{} 
\end{proof}

\begin{proposition}\label{prop4}
Let $X$ and $Y$ be connected images. If $\#Y = 1$ then  for continuous maps $f_1, \ldots, f_i : X \longrightarrow Y$ with $i \geq 2,$ $C(f_1, \ldots, f_i) = X.$
\end{proposition}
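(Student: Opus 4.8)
The plan is to argue directly from the definition of the coincidence point set, using the trivial structure of a one-point codomain. First I would observe that since $\#Y = 1$, the image $Y$ consists of a single point, say $Y = \{y_0\}$. Any function $f : X \longrightarrow Y$ is then forced to be the constant map $f(x) = y_0$ for every $x \in X$; there is literally no other function from $X$ to $Y$. In particular each of the given continuous maps satisfies $f_j(x) = y_0$ for all $x \in X$ and all $j \in \{1, \ldots, i\}$.

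Next I would plug this into the definition
\[
C(f_1, \ldots, f_i) = \{x \in X \, | \, f_1(x) = \cdots = f_i(x)\}.
\]
For an arbitrary $x \in X$ we have $f_1(x) = y_0 = f_2(x) = \cdots = f_i(x)$, so the defining condition is satisfied vacuously by every point of $X$. Hence $X \subseteq C(f_1, \ldots, f_i)$, and since trivially $C(f_1, \ldots, f_i) \subseteq X$, we conclude $C(f_1, \ldots, f_i) = X$, which is exactly the claim.

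Strictly speaking one should also note that the constant map into a single point is $(\kappa_1, \kappa_2)$-continuous, so the hypothesis ``$f_1, \ldots, f_i$ continuous'' is automatically consistent (it imposes no extra restriction); this can be checked instantly from Theorem 2.8 (the adjacency characterization of continuity), since the $\Leftrightarrow_{\kappa_2}$ condition on images is met whenever the two image points are equal. There is essentially no obstacle here — the only thing to be careful about is making the logical point that \emph{every} map into a one-point image is constant, so the statement is really about the absence of choice in the codomain rather than about any property of the $f_j$ themselves; the result is in this sense a degenerate counterpart to the previous proposition, where $\#Y > 1$ gave room to separate the maps.
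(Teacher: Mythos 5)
Your argument is correct and follows essentially the same route as the paper's proof: since $Y$ is a single point, every map $X \longrightarrow Y$ is the constant map to $y_0$, so the coincidence condition holds at every $x \in X$ and $C(f_1,\ldots,f_i)=X$. The added remark that continuity is automatic is a harmless elaboration the paper leaves implicit.
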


\begin{proof}
Without loss of generality, we may assume that $Y=\{y_0\},$ then all continuous functions $f_1, \ldots, f_i : X \longrightarrow Y$ has to be the constant function $c : X \longrightarrow Y$ defined as $c(x)=y_0$ for all $x \in X$ and hence meet at $y_0$. this implies that $C(f_1, \ldots, f_i) = C(c, \ldots, c) = X.$
\qed
\renewcommand{\qed}{} 
\end{proof}

\begin{figure}[h]
	\centering
	\subfloat[font=scriptsize][\scriptsize{$(X,4)$.}]{\resizebox{2.75cm}{1.8cm}{%
			\begin{tikzpicture}
			\draw [blue, thick] (0,0) -- (0,1);
			\draw [blue, thick] (0,0) -- (1,0);
			\draw [blue, thick] (0,1) -- (1,1);
			\draw [blue, thick] (1,0) -- (1,1);
			\filldraw[blue] (0,0) circle (2pt) node[left]{$x_0$};
			\filldraw[blue] (0,1) circle (2pt) node[left]{$x_1$};
			\filldraw[blue] (1,1) circle (2pt) node[right]{$x_2$};
			\filldraw[blue] (1,0) circle (2pt) node[right]{$x_3$};
			\end{tikzpicture}}
	}%
	\qquad
	\subfloat[][\scriptsize{$(Y,4)$.}]{
		\begin{tikzpicture}
		\draw [blue, thick] (1,0) -- (1,1);
		\draw [blue, thick] (0,0) -- (1,0);
		\draw [blue, thick] (1,0) -- (2,0);
		\filldraw[blue] (0,0) circle (2pt) node[left]{$y_0$};
		\filldraw[blue] (1,0) circle (2pt) node[below]{$y_1$};
		\filldraw[blue] (1,1) circle (2pt) node[above]{$y_3$};
		\filldraw[blue] (2,0) circle (2pt) node[right]{$y_2$};
		\end{tikzpicture}
	}\caption{The two adjacency relations on any 2-dimensional digital image.}%
	\label{figex}%
\end{figure}
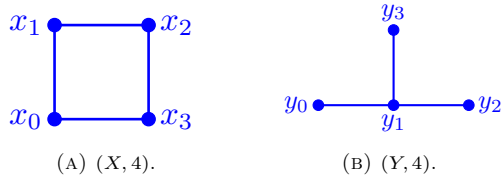

\begin{example}
Let $X = \{x_0,x_1,x_2,x_3\}$ and $Y = \{y_0,y_1,y_2,y_3\}$ be digital images with $4$-adjacency in both (see Fig. \ref{figex}. (A) and (B) respectively). Let $f : X \longrightarrow Y$ be defined as: $f(x_0)=f(x_2)=y_1, f(x_1)=y_0, f(x_3)=y_2$, $g : X \longrightarrow Y$ be defined as $g(x_0)=y_0, g(x_1)=g(x_3)=y_1, g(x_2)=y_3$ and $c : X \longrightarrow Y$ be defined as $c(x)=y_3$ for all $x \in X$. Then the mappings $f,g,c$ are all continuous. Moreover, $C(f, g, c) = \emptyset$. 
\end{example}

Let $X$ and $Y$ be connected images and $f_1, \ldots, f_i : X \longrightarrow Y$ are continuous mappings with $i \geq 3.$ One question that can naturally arise is that, can these mappings be deformed by a homotopy so that $C(f_1, \ldots, f_i) = \emptyset$? This is not always possible in the context of digital topology as shown by the following example. 

\begin{example}
Let $X$ be a rigid digital image and $id$ be the identity map on $X$. Then $C(id, id, id) \not= \emptyset$ and since $id$ is not homotopic to any other map, then $C(id, id, id)$ can never be made coincidence free when changed by a homotopy.
\end{example}

Lets consider the ``coincidence point spectrum" of $i$ mappings $f_1, \ldots, f_i : X \longrightarrow Y,$ which we denote and define as:
\[CS_i(X,Y) = \{\# C(f_1, \ldots, f_i) \, | \, f_1, \ldots, f_i : X \longrightarrow Y \mbox{ are continuous}\}.\]
More broadly, we may consider the following set, which we call the ``coincidence point spectrum".
\[CS(X,Y) = \cup_i \, CS_i(X,Y).\]

As a special case whenever $X=Y$, we simply have the coincidence point spectrum to be the following set:
\[CS_i(X) = \{\# C(f_1, \ldots, f_i) \, | \, f_1, \ldots, f_i : X \longrightarrow X \mbox{ are continuous}\},\]
and more broadly
\[CS(X) = \cup_i \, CS_i(X).\]

Following a similar pattern to the assertions in \cite{Abdullahietal19coincidence}, we will consider the extended notion of ``common fixed point set" for multiple maps as:
\begin{equation}\label{eq1}
CF(f_1, \ldots, f_i) := \{x \in X \, | \, f_1(x) = \cdots = f_i(x) = x\}.
\end{equation}

We may also define the ``minimum number of common fixed points of $i$ mappings $f_1, \ldots, f_i$ as:
\[MCF(f_1, \ldots, f_i) := \min \{\# CF(g_1, \ldots, g_i) \, | \, g_j \simeq f_j\}.\]

Another important invariant is the ``common fixed point spectrum" of the self mappings $f_1, \ldots, f_i : X \longrightarrow X,$ which we denote and define as:
\[CFS_i(X) = \{\# CF(f_1, \ldots, f_i) \, | \, f_1, \ldots, f_i : X \longrightarrow Y \mbox{ are continuous}\}.\]
More broadly, we may consider the following set, which we call the ``common fixed point spectrum" for all possible $i$.
\[CFS(X) = \cup_i \, CFS_i(X).\]

Note that, we can view the generalized concept of ``common fixed point set" for several maps simply as:
\begin{equation}\label{eq2}
C(f_1, \ldots, f_{i-1}, id) := \{x \in X \, | \, f_1(x) = \cdots = f_{i-1}(x) = x\}.
\end{equation}

Observe that (\ref{eq1}) and (\ref{eq2}) are identical. Note also that, if $i = 3$, the generalized notion will coincide with the previous concept of common fixed point of two maps $f_1$ and $f_2$ studied in \cite{Abdullahietal19coincidence}.

\begin{remark} 
If $f_1 = \cdots = f_{i-1} = f$ and $f_i = id$ with $i \geq 2$, then \[C(f, f, \ldots, f, id) = \textup{Fix($f$)}.\]
\end{remark}

\begin{theorem} 
Let $X$, $Y$ be connected images and $f_1, \ldots, f_{i+1} : X \longrightarrow Y$ be continuous maps with $i \geq 2.$ Then \[C(f_1, \ldots, f_{i+1}) \subseteq C(f_1, \ldots, f_i).\]
\end{theorem}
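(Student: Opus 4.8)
The plan is to prove the containment $C(f_1,\ldots,f_{i+1}) \subseteq C(f_1,\ldots,f_i)$ directly from the definitions, by chasing an arbitrary element of the left-hand set. This is essentially a triviality: the coincidence condition for $i+1$ maps is a conjunction that is strictly stronger than the coincidence condition for the first $i$ of them, so one expects a one-line argument once the definitions are unwound.

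First I would take an arbitrary point $x \in C(f_1,\ldots,f_{i+1})$. By the definition of the coincidence point set, this means
\[
f_1(x) = f_2(x) = \cdots = f_i(x) = f_{i+1}(x).
\]
In particular, dropping the last equality $f_i(x) = f_{i+1}(x)$, we retain the chain $f_1(x) = \cdots = f_i(x)$, which is precisely the condition for $x$ to lie in $C(f_1,\ldots,f_i)$. Hence $x \in C(f_1,\ldots,f_i)$, and since $x$ was arbitrary the desired inclusion follows.

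There is essentially no obstacle here; the only thing to be mildly careful about is the role of the hypothesis $i \geq 2$ and the requirement that $X$ and $Y$ be connected digital images and the $f_j$ be continuous. None of these is actually needed for the set-theoretic inclusion itself — they are simply the standing assumptions in force throughout this section (so that, e.g., $C(f_1,\ldots,f_i)$ is the object under study in the paper), and I would mention this only in passing rather than invoking it. If one wanted a slightly more formal phrasing, one could note that for any index set both sides are cut out of $X$ by systems of equations and that the larger system cuts out the smaller set; but the explicit element-chase above is cleaner. One could also remark that, iterating, $C(f_1,\ldots,f_{i+1}) \subseteq C(f_1,\ldots,f_i) \subseteq \cdots \subseteq C(f_1,f_2)$, though that is not requested by the statement.
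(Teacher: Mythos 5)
Your proposal is correct and follows essentially the same element-chasing argument as the paper: take $x \in C(f_1,\ldots,f_{i+1})$, note that $f_1(x)=\cdots=f_i(x)=f_{i+1}(x)$, and drop the last equality to conclude $x \in C(f_1,\ldots,f_i)$. Your side remarks (that connectedness and continuity are not needed for the inclusion, and the iterated chain of inclusions) are accurate but, as you say, inessential.
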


\begin{proof}
Let $x \in C(f_1, \ldots, f_{i+1})$, it is enough to show that $x \in C(f_1, \ldots, f_i).$ Since $x \in C(f_1, \ldots, f_{i+1})$ it implies that $f_1(x) = \cdots = f_i(x) = f_{i+1}(x),$ which further implies that $x \in C(f_1, \ldots, f_i).$
\qed
\renewcommand{\qed}{} 
\end{proof}

\begin{lemma} \label{lemma1}
Let $X$, $Y$ be digital images. Then \[\#X \in CS_i(X,Y).\] If additionally $\#Y>1$, then \[0 \in CS_i(X,Y).\]
\end{lemma}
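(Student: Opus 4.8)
The plan is to prove the two membership claims separately, each by exhibiting an explicit tuple of continuous maps $f_1,\ldots,f_i : X \longrightarrow Y$ realizing the desired cardinality of the coincidence set.

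First I would establish $\#X \in CS_i(X,Y)$. The natural candidate is to make \emph{all} of the $f_j$ equal to a single constant map. Fix any $y_0 \in Y$ and let $c : X \longrightarrow Y$ be the constant function $c(x) = y_0$ for all $x \in X$; this is $(\kappa_1,\kappa_2)$-continuous since the image of every connected set is the singleton $\{y_0\}$, which is trivially connected. Taking $f_1 = \cdots = f_i = c$ gives $f_1(x) = \cdots = f_i(x)$ for \emph{every} $x \in X$, so $C(f_1,\ldots,f_i) = X$ and hence $\#C(f_1,\ldots,f_i) = \#X$. This shows $\#X \in CS_i(X,Y)$. Note this argument needs no hypothesis on $Y$ beyond nonemptiness (which is implicit in $Y$ being a digital image), and is essentially the content of Proposition~\ref{prop4} specialized appropriately, though here $Y$ need not be a single point — we simply do not use the extra room in $Y$.

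Next I would establish that if $\#Y > 1$ then $0 \in CS_i(X,Y)$. Here I would reuse the idea already appearing in the paper's earlier proposition about making $C(f_1,\ldots,f_i) = \emptyset$: pick $y_0, y_1 \in Y$ with $y_0 \neq y_1$, set $f_1$ to be the constant map at $y_0$ and $f_2 = \cdots = f_i$ to be the constant map at $y_1$ (all constant maps are continuous as above). Then for every $x \in X$ we have $f_1(x) = y_0 \neq y_1 = f_2(x)$, so no point of $X$ can lie in $C(f_1,\ldots,f_i)$; that is, $C(f_1,\ldots,f_i) = \emptyset$ and $\#C(f_1,\ldots,f_i) = 0$, giving $0 \in CS_i(X,Y)$. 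One small wrinkle is the case $i = 2$, where ``$f_2 = \cdots = f_i$'' is just $f_2$ alone; the construction still works verbatim since we only ever need two distinct constant maps.

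I do not anticipate a genuine obstacle in this lemma — it is a packaging of the constant-map constructions already used in the preceding propositions, and the only things to be careful about are (i) explicitly invoking continuity of constant maps via the connectedness-preservation definition (or via the adjacency characterization in Theorem~\ref{th6}'s neighboring result), and (ii) handling the degenerate index $i = 2$ and the degenerate case $\#Y = 1$ (for which the first claim still holds but the second is vacuous by hypothesis). If one wanted a slightly slicker writeup, both claims could be phrased as: $C$ of $i$ copies of a constant map is $X$, and $C$ of two distinct constant maps (padded out to length $i$) is $\emptyset$.
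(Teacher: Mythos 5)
Your proof is correct and follows essentially the same route as the paper: repeated copies of one map (you use a constant map, the paper allows any continuous $f$) give $C = X$, and two distinct constant maps padded to length $i$ give $C = \emptyset$ when $\#Y > 1$. The only difference is cosmetic, and if anything your choice of a constant map is slightly safer since such a map always exists.
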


\begin{proof}
Let $x \in X$ then $x \in C(f,\ldots,f)$ if and only if  $f(x)=\cdots=f(x)$. Since $x$ is arbitrary taken, we then obtain $C(f,\ldots,f) = X$ which obviously implies that $\#X \in CS_i(X,Y)$ for all $i.$ 
	
For $0 \in CS_i(X,Y),$ since $\#Y>1$ we have at least two points  $y_0,y_1\in Y$ and can define the constant maps $c(x)=y_0$ and $c^{\prime}(x)=y_1$ for all $x \in X,$ such that $C(c,\ldots,c^{\prime},c^{\prime})=\emptyset$ which completes the proof.
\qed
\renewcommand{\qed}{} 
\end{proof}

\begin{lemma} \label{lemma2}
Let $X$ and $Y$ be digital images with $n=\#X$ and $Y$ having at least one pair of adjacent points. Then \[CS_2(X,Y) = \{0,1,\ldots,n\}.\]
\end{lemma}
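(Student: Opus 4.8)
The plan is to establish both inclusions $CS_2(X,Y) \subseteq \{0,1,\ldots,n\}$ and $\{0,1,\ldots,n\} \subseteq CS_2(X,Y)$. The first inclusion is immediate: for any two continuous maps $f_1, f_2 : X \longrightarrow Y$, the set $C(f_1,f_2) \subseteq X$, so $0 \leq \#C(f_1,f_2) \leq n$, giving $CS_2(X,Y) \subseteq \{0,1,\ldots,n\}$. By Lemma \ref{lemma1} we already have $n = \#X \in CS_2(X,Y)$ (take $f_1 = f_2$) and $0 \in CS_2(X,Y)$ (since $Y$ has a pair of adjacent points, $\#Y > 1$, so use two distinct constant maps). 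It therefore remains to realize each intermediate value $k$ with $1 \leq k \leq n-1$ as $\#C(f_1,f_2)$ for suitable continuous $f_1, f_2$.

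First I would fix an ordering $X = \{x_1, x_2, \ldots, x_n\}$ and fix a pair of adjacent points $y_0 \leftrightarrow_{\kappa_2} y_1$ in $Y$. Given $k$ with $1 \leq k \leq n-1$, the idea is to let both maps agree (sending $x_j \mapsto y_0$) on the first $k$ points $x_1,\ldots,x_k$, and on the remaining points let $f_1$ send $x_j \mapsto y_0$ while $f_2$ sends $x_j \mapsto y_1$. Concretely, define $f_1 \equiv y_0$ to be the constant map, and define $f_2$ by $f_2(x_j) = y_0$ for $1 \leq j \leq k$ and $f_2(x_j) = y_1$ for $k < j \leq n$. Then $C(f_1,f_2) = \{x_1,\ldots,x_k\}$, which has exactly $k$ elements, so $k \in CS_2(X,Y)$ provided $f_2$ is continuous.

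The main obstacle is precisely verifying continuity of $f_2$: the subset of $X$ where $f_2 = y_0$ and the subset where $f_2 = y_1$ may be adjacent in $X$, and at such a boundary pair $x \leftrightarrow_{\kappa_1} x'$ we need $f_2(x) \Leftrightarrow_{\kappa_2} f_2(x')$, i.e. $y_0 \Leftrightarrow_{\kappa_2} y_1$, which holds by our choice of $y_0 \leftrightarrow_{\kappa_2} y_1$. Indeed, $f_2$ takes only the two values $y_0, y_1$, and whenever two points of $X$ are adjacent their images are either equal or the pair $\{y_0,y_1\}$, which is $\kappa_2$-adjacent; so by Theorem \cite{Boxer99classical} (the adjacency characterization of digital continuity stated above), $f_2$ is $(\kappa_1,\kappa_2)$-continuous regardless of the ordering of $X$. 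The constant map $f_1$ is trivially continuous. This handles all $k$ with $1 \leq k \leq n-1$, and together with the endpoint cases from Lemma \ref{lemma1} we conclude $CS_2(X,Y) = \{0,1,\ldots,n\}$. One should note that the hypothesis that $Y$ has at least one pair of adjacent points is used in an essential way here and cannot be dropped: if $Y$ were totally disconnected, every continuous map from a connected $X$ would be constant, forcing $C(f_1,f_2)$ to be either $X$ or $\emptyset$.
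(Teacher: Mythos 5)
Your proposal is correct and follows essentially the same route as the paper's own proof: realize each intermediate value $k$ by pairing a constant map at one of two adjacent points $y_0\leftrightarrow y_1$ with a two-valued map taking the value $y_0$ on exactly $k$ points and $y_1$ elsewhere, with the endpoints $0$ and $n$ supplied by Lemma \ref{lemma1}. You are in fact slightly more complete than the paper, which asserts the two-valued map is ``automatically continuous'' without invoking the adjacency characterization of continuity as you do.
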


\begin{proof}
From Lemma \ref{lemma1} we have $0 \in CS_2(X,Y).$ Now, let $a$ and $b$ be any two adjacent points in $Y$. Let $m$ be any number $1 \leq m \leq  n=\#X$. Now, define a map $f: X \longrightarrow Y$ as $f(x)=a$ for $x\in\{x_1,\ldots, x_m\}$ and $f(x)=b$ for $x\in\{x_{m+1},\ldots, x_n\}$. Then $f$ is automatically continuous and $m=\#C(f,c),$ where $c$ is the constant function with value $a.$ (i.e $c(x)=a$ for all $x$). Thus $m \in CS_2(X,Y).$ As a special case, even when $n=1$ we still have $CS_2(X,Y) = \{0,1\},$ which is obvious from Lemma \ref{lemma1}.  Moreover, when $X=Y$ with $\#X > 1,$ we have $CS_2(X) = \{0,1,\ldots,n\}.$
\qed
\renewcommand{\qed}{} 
\end{proof}

\begin{theorem} \label{tincc}
Let $X$ and $Y$ be digital images. Then \[CS_i(X,Y) \subseteq CS_{i+1}(X,Y).\]
If additionally $Y$ contains at least one pair of adjacent points, then \[CS_i(X,Y) = CS_{i+1}(X,Y).\]
\end{theorem}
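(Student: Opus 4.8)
The plan is to establish the two claims separately. For the inclusion $CS_i(X,Y) \subseteq CS_{i+1}(X,Y)$, I would start from an arbitrary element $k \in CS_i(X,Y)$, which by definition means there exist continuous maps $f_1,\ldots,f_i : X \longrightarrow Y$ with $\#C(f_1,\ldots,f_i) = k$. The natural move is to ``pad'' this family with a duplicate: set $f_{i+1} := f_i$ (or equally $f_{i+1} := f_1$). Then $C(f_1,\ldots,f_i,f_{i+1}) = \{x \in X \mid f_1(x) = \cdots = f_i(x) = f_i(x)\}$, and since the extra equation $f_i(x) = f_i(x)$ is automatically satisfied, this set equals $C(f_1,\ldots,f_i)$ exactly. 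Hence $k = \#C(f_1,\ldots,f_i,f_{i+1}) \in CS_{i+1}(X,Y)$, giving the inclusion. This direction is routine and mirrors the padding already used implicitly in Propositions above.

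For the reverse inclusion under the extra hypothesis, I would take $k \in CS_{i+1}(X,Y)$, witnessed by continuous $g_1,\ldots,g_{i+1} : X \longrightarrow Y$ with $\#C(g_1,\ldots,g_{i+1}) = k$. The difficulty is that dropping a map generally \emph{enlarges} the coincidence set, so I cannot simply delete $g_{i+1}$. Instead the idea is to \emph{merge} two of the maps into one via a single map whose coincidence behavior encodes the joint condition. Let $A = C(g_1,\ldots,g_{i+1})$. I want to produce continuous $h_1,\ldots,h_i : X \longrightarrow Y$ with $C(h_1,\ldots,h_i) = A$. The cleanest route uses the hypothesis that $Y$ has a pair of adjacent points $a \leftrightarrow b$: define a map $h$ that is constant with value $a$ on $A$ and — here is the subtlety — extends continuously to all of $X$ while avoiding creating new coincidences with the other $h_j$'s. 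This is delicate because continuity of $h$ constrains its values on $\kappa$-neighbors of $A$.

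A more robust alternative, which I expect to be the intended argument, avoids reconstructing the witnessing maps from scratch and instead combines Lemmas \ref{lemma1} and \ref{lemma2} with the first inclusion. Note that $CS_{i+1}(X,Y) \subseteq CS_i(X,Y)$ would follow if we knew $CS_i(X,Y)$ is already as large as it can be, namely $\{0,1,\ldots,n\}$ where $n = \#X$. By Lemma \ref{lemma2}, $CS_2(X,Y) = \{0,1,\ldots,n\}$ whenever $Y$ has a pair of adjacent points. Combining this with the already-established chain $CS_2(X,Y) \subseteq CS_3(X,Y) \subseteq \cdots$, we get $\{0,1,\ldots,n\} = CS_2(X,Y) \subseteq CS_i(X,Y)$ for every $i \geq 2$. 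On the other hand, trivially $CS_i(X,Y) \subseteq \{0,1,\ldots,n\}$ since any coincidence set is a subset of $X$ and $\#X = n$. Therefore $CS_i(X,Y) = \{0,1,\ldots,n\}$ for all $i \geq 2$, and in particular $CS_i(X,Y) = CS_{i+1}(X,Y)$. The case $i = 1$ (if it is even in scope, since the paper generally assumes $i \geq 2$ or $i \geq 3$) would need separate comment, but under the standing conventions the statement for $i \geq 2$ suffices. The main obstacle is purely organizational: one must be careful that Lemma \ref{lemma2}'s hypothesis (``$Y$ contains at least one pair of adjacent points'') is exactly the extra hypothesis invoked here, so the two-line collapse argument is legitimate, and that the unconditional inclusion is proved first so the equality is not circular.
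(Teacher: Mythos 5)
Your proposal is correct and follows essentially the same route as the paper: the inclusion is obtained by padding the family with a duplicate map ($f_{i+1}=f_i$), and the equality by invoking Lemma \ref{lemma2} so that $CS_2(X,Y)=\{0,1,\ldots,\#X\}$ combines with the inclusion chain and the trivial bound $CS_i(X,Y)\subseteq\{0,1,\ldots,\#X\}$ to force all the spectra to coincide. The only difference is presentational: the paper compresses this collapse into the single sentence ``the equality follows from Lemma \ref{lemma2},'' whereas you spell it out (and rightly discard the more delicate ``merge two maps'' idea, which is not needed).
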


\begin{proof}
Let $m \in CS_i(X,Y)$, it is enough to show that $m \in CS_{i+1}(X,Y).$ Since $m \in CS_i(X,Y)$ it implies that there exist some continuous functions $f_1, \ldots, f_i : X \longrightarrow Y$ such that $m = \# C(f_1, \ldots, f_{i})$. This further implies that $m = \# C(f_1, \ldots, f_i, f_i)$ which immediately proves that $m \in CS_{i+1}(X,Y)$ as required. The equality follows from Lemma \ref{lemma2}.
\qed
\renewcommand{\qed}{} 
\end{proof}

\begin{theorem} \label{tincf}
Let $X$ be a digital image. Then \[F(X) \subseteq CS_2(X).\]
\end{theorem}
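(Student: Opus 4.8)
The plan is to show that every element of the fixed point spectrum $F(X)$ already occurs as the cardinality of a coincidence set of two continuous self-maps, simply by pairing an arbitrary self-map with the identity. Here $F(X)$ denotes the fixed point spectrum of $X$, i.e. $F(X) = \{\#\mathrm{Fix}(f) \mid f : X \longrightarrow X \text{ is } \kappa\text{-continuous}\}$, so the content of the statement is that this set is contained in $CS_2(X) = \{\# C(f_1,f_2) \mid f_1,f_2 : X \longrightarrow X \text{ continuous}\}$.

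First I would fix an arbitrary $m \in F(X)$ and choose a $\kappa$-continuous map $f : X \longrightarrow X$ with $\#\mathrm{Fix}(f) = m$. Next I would set $f_1 = f$ and $f_2 = id$. The identity map is $\kappa$-continuous (immediate from the characterization of digital continuity, since $x \leftrightarrow_\kappa y$ gives $id(x) \leftrightarrow_\kappa id(y)$), so the pair $(f_1, f_2)$ is admissible in the definition of $CS_2(X)$.

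The key step is the elementary identity, which is the $i=2$ case of the Remark preceding this statement: $C(f_1,f_2) = C(f, id) = \{x \in X \mid f(x) = id(x)\} = \{x \in X \mid f(x) = x\} = \mathrm{Fix}(f)$. Consequently $\# C(f_1, f_2) = \#\mathrm{Fix}(f) = m$, whence $m \in CS_2(X)$. Since $m$ was arbitrary, $F(X) \subseteq CS_2(X)$.

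I do not expect any genuine obstacle here: the argument is purely formal once $F(X)$ is read as the fixed point spectrum, and the only point requiring a (one-line) check is the digital continuity of $id$. If anything, the mild subtlety is keeping the bookkeeping straight between the self-map setting ($CS_2(X)$, where both coordinates land in $X$) and the more general $CS_i(X,Y)$ notation used earlier; restricting to $Y = X$ from the outset avoids that.
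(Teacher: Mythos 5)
Your proposal is correct and coincides with the paper's own argument: both take $m \in F(X)$, pick $f$ with $\#\mathrm{Fix}(f)=m$, and observe $C(f,id)=\mathrm{Fix}(f)$, so $m\in CS_2(X)$. Your extra remarks on the continuity of $id$ and the self-map bookkeeping are fine but not needed beyond what the paper states.
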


\begin{proof}
Let $m \in F(X)$, it is enough to show that $m \in CS_2(X).$ Since $m \in F(X)$ it implies that there exist a continuous map $f : X \longrightarrow X$ such that $m = \# Fix(f)$. This further implies that $m = \# C(f, id)$ which immediately proves that $m \in CS_2(X)$ as required.
\qed
\renewcommand{\qed}{} 
\end{proof}

\begin{figure}[h]%
\centering
\subfloat[font=scriptsize][\scriptsize{$(X,6)$.}]{\resizebox{3.5cm}{3cm}{%
	\begin{tikzpicture}
			\draw[step=1cm,white,very thin] (0,0) grid (3,3);
			\draw [blue, thick] (0,0) -- (0,2);
			\draw [blue, thick] (0,2) -- (1,3);
			\draw [blue, thick] (1,3) -- (3,3);
			\draw [blue, thick] (1,1) -- (1,3);
			\draw [blue, thick] (0,0) -- (2,0);
			\draw [blue, thick] (0,2) -- (2,2);
			\draw [blue, thick] (3,1) -- (3,3);
			\draw [blue, thick] (2,0) -- (2,2);
			\draw [blue, thick] (2,0) -- (3,1);
			\draw [blue, thick] (1,1) -- (3,1);
			\draw [blue, thick] (2,2) -- (3,3);
			\draw [blue, thick] (0,0) -- (1,1);
			\filldraw[blue] (0,0) circle (3pt) node[left]{$x_0$};
			\filldraw[blue] (1,1) circle (3pt) node[left]{$x_3$};
			\filldraw[blue] (0,2) circle (3pt) node[left]{$x_1$};
			\filldraw[blue] (2,2) circle (3pt) node[right]{$x_5$};
			\filldraw[blue] (2,0) circle (3pt) node[below]{$x_4$};
			\filldraw[blue] (1,3) circle (3pt) node[above]{$x_2$};
			\filldraw[blue] (3,3) circle (3pt) node[right]{$x_6$};
			\filldraw[blue] (3,1) circle (3pt) node[right]{$x_7$};
			\end{tikzpicture}}
	}%
	\qquad
	\subfloat[][\scriptsize{$(Y,6)$.}]{
		\resizebox{3.5cm}{3cm}{%
			\begin{tikzpicture}
			\draw[step=1cm,white,very thin] (0,0) grid (3,3);
			\draw [blue, thick] (0,2) -- (1,3);
			\draw [blue, thick] (1,3) -- (3,3);
			\draw [blue, thick] (1,1) -- (1,3);
			\draw [blue, thick] (0,2) -- (2,2);
			\draw [blue, thick] (3,1) -- (3,3);
			\draw [blue, thick] (2,0) -- (2,2);
			\draw [blue, thick] (2,0) -- (3,1);
			
			\draw [blue, thick] (1,1) -- (3,1);
			\draw [blue, thick] (2,2) -- (3,3);
			\filldraw[blue] (1,1) circle (3pt) node[left]{$y_0$};
			\filldraw[blue] (0,2) circle (3pt) node[left]{$y_6$};
			\filldraw[blue] (2,2) circle (3pt) node[above]{$y_4$};
			\filldraw[blue] (2,0) circle (3pt) node[below]{$y_1$};
			\filldraw[blue] (1,3) circle (3pt) node[above]{$y_5$};
			\filldraw[blue] (3,3) circle (3pt) node[above]{$y_3$};
			\filldraw[blue] (3,1) circle (3pt) node[right]{$y_2$};
			\end{tikzpicture}}
	}\caption{Contractible 3-dimensional digital images with 6-adjacency relation.}%
	\label{fig3}%
\end{figure}
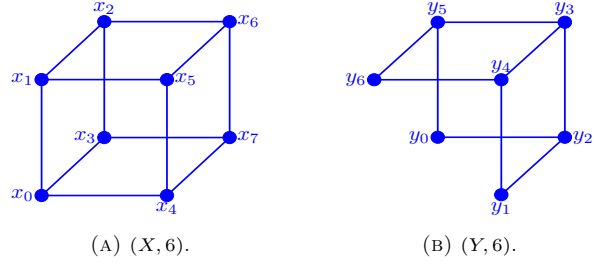

Now, it is natural to wonder if there is any interesting relationship between $F(X)$ and $CS_2(X,Y)$.

\begin{example}\label{exm1}
Let $X$ and $Y$ be the digital images in Fig. \ref{fig3}. Then, $Y \subset X$ and from \cite{BoxerSta19fixed}, we have \[F(X)=\{0,1,2,3,4,5,6,8\}.\]
By Lemma \ref{lemma2}, we have \[CS_2(X,Y)=CS_2(X)=\{0,1,2,3,4,5,6,7,8\}.\]
Moreover, if we let $Z=\{z_0\}$ then from Proposition \ref{prop4}, we have \[CS_2(X,Z)=\{8\}.\] Therefore 
\[CS_2(X,Y) \not= F(X) \not= CS_2(X,Z).\]
\end{example}

In Example \ref{exm1} we have shown that $F(X)$ is different from both $CS(X)$ and $CS(X,Y)$. Now, in the following example we demonstrate further that $CS(X,Y)$ and $CS(X)$ are not generally the same concepts as suggested by the preceding example. 

Moreover, we can also note that when $Y$ is a single point image, we always have \[CS(X,Y)=\{\#X\}\] and \[1 = \#CS(X,Y) \leq \#CS(X).\]

\begin{example}
Let $X$ be any connected digital images and $Y=\{y_0\}$ (i.e. $Y$ is a single point image). Then, we always have \[CS_i(X,Y)= \{\# X\} = CS(X,Y), \hspace{0.5cm} \mbox{for any } i\geq 2.\]
\end{example}

From the above example, we observe that the following inclusion 
\[F(X) \subseteq CS_2(X,Y).\]
does not always holds whenever $Y\not=X.$

Another instance where $F(X)$ can be different from $CS(X)$ is the following example.

\begin{example}
Let $C_n$ be any digital cycle with $n$ points. From \cite{BoxerSta19fixed}, we have continuous functions say $f$ having $n$ number of fixed points, where $n=\{1,2,3,4\}$ so that $C(id, f)$ will give us the following. 
\[CS(C_n) = F(C_n) = \left\{ \begin{array}{ll}
	\{1\}, & \mbox{if $ n = 1$};\\
	\{0,\ldots, n\}, & \mbox{if $ n \in \{2,3,4\}$.} \end{array} \right. \]
Moreover, from \cite{BoxerSta19fixed} and Lemma \ref{lemma2}, we have \[\{0,1,\ldots,\floor{\frac{n}{2}} +1,n\} \subset \{0,1,\ldots, n\} = CS(C_n), \hspace{0.5cm} \mbox{for any } n\geq 5,\] where $F(C_n)=\{0,1,\ldots,\floor{\frac{n}{2}} +1,n\}$ and $\floor{\cdot}$ means the floor function.
\end{example}

\begin{theorem} \label{tdis}
Let $X$ be a connected digital image and $Y$ be totally disconnected digital image with $\#Y>1.$ Then \[CS_i(X,Y) = \{0, \#X\}, \hspace{0.5cm} \mbox{for all } i \geq 2.\]
\end{theorem}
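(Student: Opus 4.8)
The plan is to exploit the fact that total disconnectedness of $Y$ forces every continuous map $X\to Y$ to be constant, after which the coincidence set can only be all of $X$ or empty. First I would record the elementary observation that in a totally disconnected image the only $\kappa$-connected subsets are singletons: if a subset contained two distinct points, its connected component would be strictly larger than a singleton, contradicting the definition. Next, since $X$ is connected and $f_j:X\longrightarrow Y$ is continuous, $f_j(X)$ is a $\kappa$-connected subset of $Y$, hence a single point; so each $f_j$ is a constant map, say $f_j\equiv y_j$ for some $y_j\in Y$.

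Given this, for any choice of continuous $f_1,\ldots,f_i:X\longrightarrow Y$ with $i\ge 2$ we have
\[
C(f_1,\ldots,f_i)=\{x\in X \,\mid\, y_1=\cdots=y_i\},
\]
which equals $X$ if $y_1=\cdots=y_i$ and equals $\emptyset$ otherwise. Hence $\#C(f_1,\ldots,f_i)\in\{0,\#X\}$, giving the inclusion $CS_i(X,Y)\subseteq\{0,\#X\}$. For the reverse inclusion, $\#X\in CS_i(X,Y)$ is already furnished by Lemma \ref{lemma1} (take all $f_j$ to be the same constant map), and $0\in CS_i(X,Y)$ also follows from Lemma \ref{lemma1} since $\#Y>1$ lets us pick $y_0\neq y_1$ and set, e.g., $f_1\equiv y_0$ and $f_2\equiv\cdots\equiv f_i\equiv y_1$, so that the coincidence set is empty. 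Combining the two inclusions yields $CS_i(X,Y)=\{0,\#X\}$ for every $i\ge 2$.

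I do not anticipate a genuine obstacle here; the only point requiring a little care is the justification that a continuous image of a connected image into a totally disconnected image is a single point, and that is immediate from the definition of $(\kappa_1,\kappa_2)$-continuity together with the characterization of connected subsets of $Y$. The argument is uniform in $i$, so no separate treatment of small values of $i$ is needed beyond noting $i\ge 2$ is what allows a nonempty ``disagreeing'' choice of constants for the $0$ case.
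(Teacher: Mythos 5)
Your proposal is correct and follows essentially the same route as the paper: total disconnectedness of $Y$ forces every continuous map from the connected image $X$ to be constant, so the coincidence set is either all of $X$ (all constants equal) or empty, and both values are realized. In fact your write-up is slightly more careful than the paper's, which for the case $i=2$ appeals to Lemma \ref{lemma2} even though that lemma assumes $Y$ has a pair of adjacent points (false here); your explicit constancy argument together with Lemma \ref{lemma1} for realizing $0$ and $\#X$ closes that gap cleanly.
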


\begin{proof}
For $i=2$, $CS_2(X,Y) = \{0, \#X\}$ follows immediately from Lemma \ref{lemma2}. Since $Y$ has no pair of adjacent points, the only contributors to the coincidence point spectrum are the coincidence of distinct constants maps and self-coincidence of any map $f$. Subsequently, we have $CS_i(X,Y) = \{0, \#X\}$ for all $i$ as required.
\qed
\renewcommand{\qed}{} 
\end{proof}

If $X$ in Theorem \ref{tdis} is also totally disconnected then we can get more elements in the coincidence point spectrum. For example, take $X = Y$
to be a set of two non-adjacent points, then \[CS_2(X,Y) = \{0,1,2\}.\] The one coincidence point is obtained by taking the coincidence of the identity map $id$ and a constant map $c$.

Now, the only case where we are not sure what exactly the set $CS_2(X,Y)$ will be is when $Y$ is totally disconnected i.e. $Y$ has no pair of adjacent points with $\#Y > 1$ and $X$ is just disconnected.  However, we think probably that in this situation the set $CS_2(X,Y)$ may be obtained in terms of the number of points in each component of $X.$

From Theorem \ref{tincc}. Example \ref{exm2} and Theorem \ref{tdis}, we have the following.

\begin{corollary} 
	Let $X$ and $Y$ be digital images. Then \[CS_2(X,Y) = CS_3(X,Y) = \cdots = CS_i(X,Y), \hspace{0.5cm} \mbox{for all } i \geq 2,\]
	in any of the following cases:
	\begin{itemize}
		\item $X$ is connected;
		\item $\#Y = 1$;
		\item $Y$ has at least one pair of adjacent points.
	\end{itemize}
\end{corollary}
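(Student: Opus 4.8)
The plan is to prove the corollary by a short case analysis on the structure of the codomain $Y$, since the three listed hypotheses overlap and, once sorted out, reduce to only a few genuinely distinct situations. As a preliminary remark I would note that, with no hypothesis at all, Theorem \ref{tincc} already supplies the chain $CS_2(X,Y) \subseteq CS_3(X,Y) \subseteq \cdots$, so in every case it suffices to establish the reverse inclusions $CS_{i+1}(X,Y) \subseteq CS_i(X,Y)$ for all $i \geq 2$; a one-line induction then upgrades these to the asserted string of equalities.

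Next I would dispose of the last two bullets directly. If $Y$ has at least one pair of adjacent points, then $CS_i(X,Y) = CS_{i+1}(X,Y)$ is literally the equality clause of Theorem \ref{tincc}, and iterating it gives the conclusion. If $\#Y = 1$, then by Example \ref{exm2} (the single-point codomain) we have $CS_i(X,Y) = \{\#X\}$ for every $i \geq 2$, so all of these sets coincide with the single set $\{\#X\}$.

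For the first bullet, assume $X$ is connected. The two arguments just given already cover the subcases $\#Y = 1$ and ``$Y$ has a pair of adjacent points'' (and neither of them used connectedness of $X$). The only remaining possibility is that $\#Y > 1$ and $Y$ has no pair of adjacent points; but this says exactly that $Y$ is totally disconnected, since then the $\kappa$-component of each point of $Y$ is a singleton. In that case Theorem \ref{tdis} applies --- and it is precisely here that connectedness of $X$ enters --- giving $CS_i(X,Y) = \{0,\#X\}$ for all $i \geq 2$, so once more these sets all agree.

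I do not expect a serious obstacle: the substance is just the bookkeeping that the three hypotheses are exhaustive once one splits along ``$\#Y = 1$ / $Y$ has an adjacent pair / $Y$ totally disconnected with $\#Y > 1$'', together with keeping track that only the third subcase requires Theorem \ref{tdis} (and hence the connectedness of $X$). The one point worth a line of justification is the identification ``$\#Y > 1$ and $Y$ has no adjacent pair'' $\Longleftrightarrow$ ``$Y$ totally disconnected'', which licenses the appeal to Theorem \ref{tdis}.
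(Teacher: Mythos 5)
Your proposal is correct and follows essentially the same route as the paper, which derives the corollary precisely from Theorem \ref{tincc} (adjacent-pair case), the single-point-codomain example ($\#Y=1$ case), and Theorem \ref{tdis} ($X$ connected with $Y$ totally disconnected, $\#Y>1$); your explicit case split and the observation that ``no adjacent pair and $\#Y>1$'' is exactly total disconnectedness is just the bookkeeping the paper leaves implicit.
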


\begin{remark}
	We notice that the only case in which probably $CS_i(X,Y)$ is not equal to $CS_{i+1}(X,Y)$ for any $i$ is when $X$ is disconnected and $Y$ is totally disconnected with $\#Y > 1.$ 
\end{remark}

To this end, we still believed that even in that case the coincidence point spectra are all equal for any number of maps. However, we are still yet to prove it as it seems difficult to do. Therefore, we conjectured the following.

\begin{conjecture}
	Let $X$ and $Y$ be digital images and in addition, let $X$ be disconnected and $Y$ be totally disconnected with $\#Y > 1.$ Then \[CS_2(X,Y) = CS_3(X,Y) = \cdots = CS_i(X,Y), \hspace{0.5cm} \mbox{for all } i \geq 2.\]
\end{conjecture}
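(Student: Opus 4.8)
The plan is to prove this (apparently harder) equality by showing that, under these hypotheses, $CS_i(X,Y)$ is in fact \emph{independent of} $i$ for every $i\geq 2$, because it equals a purely combinatorial invariant of the component structure of $X$. First I would write $X = X_1\cup\cdots\cup X_k$ as the union of its $\kappa_1$-connected components; since $X$ is disconnected we have $k\geq 2$. The key observation is that every continuous $f:X\longrightarrow Y$ is constant on each $X_j$: indeed $f(X_j)$ is a nonempty $\kappa_2$-connected subset of the totally disconnected image $Y$, hence (being contained in the $\kappa_2$-component of any of its points) a singleton. Conversely, by the characterization of continuity in terms of adjacent points, any map that is constant on each component is continuous, because $\kappa_1$-adjacent points always lie in a common component. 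So continuous maps $X\longrightarrow Y$ correspond exactly to arbitrary functions $\{1,\dots,k\}\longrightarrow Y$.

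Given this, the coincidence set becomes transparent. For continuous $f_1,\dots,f_i:X\longrightarrow Y$, set $J=\{\,j: f_1|_{X_j}=\cdots=f_i|_{X_j}\,\}$. Because each $f_\ell$ is constant on components, $C(f_1,\dots,f_i)=\bigcup_{j\in J}X_j$, so $\#C(f_1,\dots,f_i)=\sum_{j\in J}\#X_j$. Hence $CS_i(X,Y)\subseteq\mathcal S$, where $\mathcal S:=\{\,\sum_{j\in J}\#X_j : J\subseteq\{1,\dots,k\}\,\}$ is the set of subset sums of the multiset $\{\#X_1,\dots,\#X_k\}$ — a set that visibly does not depend on $i$.

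It remains to check the reverse inclusion $\mathcal S\subseteq CS_i(X,Y)$ uniformly in $i\geq 2$, and this is where $\#Y>1$ is used. Fix $J\subseteq\{1,\dots,k\}$ and two distinct points $y_0,y_1\in Y$. Define $f_1,\dots,f_i$ by $f_\ell(x)=y_0$ whenever $x\in X_j$ with $j\in J$ (for all $\ell$), and for $x\in X_j$ with $j\notin J$ put $f_1(x)=y_0$ while $f_\ell(x)=y_1$ for $\ell\geq 2$. Each $f_\ell$ is constant on components and hence continuous; on $X_j$ with $j\in J$ all maps equal $y_0$, while on $X_j$ with $j\notin J$ we have $f_1=y_0\neq y_1=f_2$, so these maps agree on exactly the components indexed by $J$. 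Thus $\#C(f_1,\dots,f_i)=\sum_{j\in J}\#X_j$, and since $J$ was arbitrary, $\mathcal S\subseteq CS_i(X,Y)$ for every $i\geq 2$. Combining the two inclusions gives $CS_i(X,Y)=\mathcal S$ for all $i\geq 2$, which is the asserted chain of equalities.

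The genuinely substantive step is the first one: recognizing that the total disconnectedness of $Y$ forces every continuous map to factor through the finite set of components of $X$, which collapses the problem to the $i$-independent combinatorics of subset sums. After that, realizing a prescribed agreement set $J$ by constant-on-components maps is routine, and $i\geq 2$ together with $\#Y\geq 2$ is exactly what is needed to break coincidence off of $J$ while remaining continuous. As a sanity check, the case $k=1$ recovers Theorem \ref{tdis}, and for $X=Y$ a pair of non-adjacent points the argument gives $\mathcal S=\{0,1,2\}$, matching the example following that theorem.
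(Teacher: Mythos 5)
Your proposal is correct, and it is worth pointing out that the paper itself offers no proof of this statement: it is explicitly left as a conjecture (the authors only remark that they expect $CS_2(X,Y)$ to be expressible ``in terms of the number of points in each component of $X$''). Your argument settles it, and in fact proves something stronger: under these hypotheses $CS_i(X,Y)$ equals, for every $i\geq 2$, the set $\mathcal{S}$ of subset sums of the component cardinalities $\#X_1,\dots,\#X_k$, which is exactly the description the authors speculate about. The two pillars are sound. First, total disconnectedness of $Y$ (components are singletons) forces $Y$ to have no pair of adjacent points, so every $\kappa_2$-connected subset of $Y$ is a singleton; by Rosenfeld's definition of continuity, $f(X_j)$ is connected for each component $X_j$, hence $f$ is constant on components, and conversely any map constant on components is continuous by the adjacency characterization of continuity, since adjacent points lie in a common component. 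Second, the realization step only needs two distinct points of $Y$ and at least two maps, which is precisely where $\#Y>1$ and $i\geq 2$ enter; the resulting coincidence set is the union of the components indexed by the chosen $J$, so every element of $\mathcal{S}$ is attained for every $i\geq 2$, and nothing outside $\mathcal{S}$ ever occurs. Your formula also recovers Theorem \ref{tdis} when $X$ is connected ($\mathcal{S}=\{0,\#X\}$) and the two-point example following it ($\mathcal{S}=\{0,1,2\}$), which is a good consistency check. The only points worth stating explicitly if this were written up: the standing assumption that digital images are finite (so $X$ has finitely many components and all the cardinalities are finite), and the one-line verification that a totally disconnected image contains no adjacent pair, since the paper's definition of total disconnectedness is phrased via components rather than adjacencies.
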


\section{Homotopy Coincidence Point Spectrum}

In this section, we study homotopy coincidence point spectrum, which is an important invariant that will give us an estimate on all possible number of coincidence points of the maps $f_1, \ldots, f_i$ when they are all allowed to be changed by a homotopy. 

So, for some maps $f_1, \ldots, f_i : X \longrightarrow Y,$ we may define the set $HCS(f_1, \ldots, f_i ),$ which we call the ``homotopy coincidence point spectrum" of the $i$ functions $f_1, \ldots, f_i$ as:
\[HCS(f_1,\ldots, f_i) = \{\# C(g_1, \ldots, g_i)  \, | \, g_j \simeq f_j\} \subseteq \{0, 1,  \ldots ,\#X\}.\]

\begin{remark}
\begin{enumerate}[label=(\roman*) ] 
	\item $MC(f_1, \ldots, f_i) = \min{HCS(f_1, \ldots, f_i)};$
	\item Moreover, both $MC(f_1, \ldots, f_i)$ and $HCS(f_1, \ldots, f_i)$ are homotopy invariants for any continuous maps $f_1\ldots, f_i, (i \geq 2)$.
\end{enumerate}
\end{remark}

Now, for some mappings $f_1, \ldots, f_i : X \longrightarrow X$ we may consider the following set, which we call the ``homotopy common fixed point spectrum" $i$ mappings $f_1, \ldots, f_i$ as:
\[HFS(f_1, \ldots, f_i) = \{\# CF(g_1, \ldots, g_i) \, | \, g_j \simeq f_j\}.\]

\begin{theorem} \label{tinch}
Let $X$ and $Y$ be digital images and $f_1, \ldots, f_i : X \longrightarrow Y$ be continuous mappings with $i \geq 2.$ Then \[HCS(f_1, \ldots, f_i) \subseteq HCS(f_1, \ldots, f_i, f_i).\]
\end{theorem}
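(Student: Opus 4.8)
The plan is to mimic the argument used for Theorem \ref{tincc}, exploiting the fact that duplicating a map in a coincidence tuple neither enlarges nor shrinks the coincidence set. First I would take an arbitrary element $m \in HCS(f_1, \ldots, f_i)$; by definition there exist continuous maps $g_1 \simeq f_1, \ldots, g_i \simeq f_i$ with $m = \# C(g_1, \ldots, g_i)$. The goal is then to exhibit an $(i+1)$-tuple of maps, each homotopic to the corresponding member of $(f_1, \ldots, f_i, f_i)$, whose coincidence set has exactly $m$ points.

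The natural candidate is the tuple $(g_1, \ldots, g_i, g_i)$. Its first $i$ coordinates satisfy $g_j \simeq f_j$ by construction, and its last coordinate is $g_i$, which is homotopic to $f_i$ (the last entry of the target tuple), again by construction; hence this tuple is admissible in the definition of $HCS(f_1, \ldots, f_i, f_i)$. It then remains only to observe that $C(g_1, \ldots, g_i, g_i) = \{x \in X \, | \, g_1(x) = \cdots = g_i(x) = g_i(x)\} = \{x \in X \, | \, g_1(x) = \cdots = g_i(x)\} = C(g_1, \ldots, g_i)$, since the adjoined equation $g_i(x) = g_i(x)$ is vacuous. Therefore $m = \# C(g_1, \ldots, g_i, g_i) \in HCS(f_1, \ldots, f_i, f_i)$, and as $m$ was arbitrary the inclusion follows.

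There is essentially no serious obstacle here; the only point that warrants a moment's care is checking that the duplicated last coordinate meets the homotopy-class requirement in the definition of the larger spectrum, which it does trivially because we reuse the very map $g_i$ already chosen. (One could instead append any map homotopic to $f_i$, but reusing $g_i$ is what keeps the coincidence set unchanged.) If desired, one can append the parallel statement $HFS(f_1, \ldots, f_i) \subseteq HFS(f_1, \ldots, f_i, f_i)$, proved identically by replacing $C$ with $CF$ throughout.
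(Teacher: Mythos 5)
Your proposal is correct and follows essentially the same route as the paper: for $m=\#C(g_1,\ldots,g_i)$ with $g_j\simeq f_j$, one appends the already-chosen $g_i$ (homotopic to the duplicated entry $f_i$) and notes that $C(g_1,\ldots,g_i,g_i)=C(g_1,\ldots,g_i)$, so $m\in HCS(f_1,\ldots,f_i,f_i)$. Your explicit check that the duplicated coordinate satisfies the homotopy requirement is a small clarification the paper leaves implicit, but the argument is the same.
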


\begin{proof}
Let $m \in HCS_{i}(f_1, \ldots, f_i)$, we show that $m \in HCS_{i+1}(f_1, \ldots, f_i).$ Since $m \in HCS_{i}(f_1, \ldots, f_i)$ it implies that there exist some functions $g_1, \ldots, g_i : X \longrightarrow Y$ such that $g_j$ is homotopic to $f_j$ and $m = \# C(g_1, \ldots, g_{i})$. This further implies that $m = \# C(g_1, \ldots, g_i, g_i)$ which immediately proves that $m \in HCS_{i+1}(f_1, \ldots, f_i)$ as required.
\qed
\renewcommand{\qed}{} 
\end{proof}

Theorem \ref{tinch} above can actually be equality and we know of no example where those sets are not equal. Moreover, we could also see that when $Y$ has at least one pair of adjacent points, we have
\[HCS(c,c,\dots,c) = \{0,...,\#X\}.\]
This follows from a similar argument in the proof of Lemma \ref{lemma2}.

\begin{proposition} 
Let $X$ be a rigid digital image. Then \[HCS(id, \ldots, id) = HCS(id, \ldots, id, id) = \{\#X\}.\]
\end{proposition}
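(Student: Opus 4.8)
The plan is to observe that for a rigid digital image $X$, the identity map $id : X \longrightarrow X$ is homotopic only to itself, by the very definition of rigidity. Consequently, in any tuple of maps that are each required to be homotopic to $id$, the only admissible choice for each coordinate is $id$ itself. First I would note that $HCS(id, \ldots, id) = \{\# C(g_1, \ldots, g_i) \mid g_j \simeq id\}$, and since $g_j \simeq id$ forces $g_j = id$ for every $j$, the set on the right collapses to the single value $\# C(id, \ldots, id)$. Next I would compute $C(id, \ldots, id) = \{x \in X \mid id(x) = \cdots = id(x)\} = \{x \in X \mid x = \cdots = x\} = X$, which gives $\# C(id, \ldots, id) = \# X$. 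Hence $HCS(id, \ldots, id) = \{\#X\}$.

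The same argument applies verbatim to the $(i+1)$-tuple $(id, \ldots, id, id)$: every coordinate is again forced to be $id$, the coincidence set is again all of $X$, and so $HCS(id, \ldots, id, id) = \{\#X\}$ as well. This yields the claimed chain of equalities $HCS(id, \ldots, id) = HCS(id, \ldots, id, id) = \{\#X\}$. (One may alternatively derive the second equality from Theorem~\ref{tinch}, which gives $HCS(id, \ldots, id) \subseteq HCS(id, \ldots, id, id)$; combined with the fact that every homotopy coincidence point spectrum is a subset of $\{0, 1, \ldots, \#X\}$ and both sets are shown to contain $\#X$, the reverse inclusion is immediate once we know the first set is the singleton $\{\#X\}$.)

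I do not anticipate a genuine obstacle here; the statement is essentially a direct unwinding of the definition of rigidity together with the trivial observation that the identity maps always coincide everywhere. The only point requiring a word of care is the logical step that $g_j \simeq id$ implies $g_j = id$: this is exactly what ``$X$ is rigid'' means (the identity map is rigid, i.e.\ no continuous map is homotopic to $id$ except $id$ itself), so it should be invoked explicitly rather than taken for granted.
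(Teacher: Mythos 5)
Your proposal is correct and follows essentially the same argument as the paper: rigidity of $X$ means $g \simeq id$ forces $g = id$, so the only coincidence set that can appear is $C(id,\ldots,id)=X$, giving $\{\#X\}$ for both tuples. Your spelled-out version (including the optional appeal to Theorem~\ref{tinch}) is just a more detailed rendering of the paper's short proof.
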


\begin{proof}
Since $X$ is rigid, the only homotopic mapping to the identity map $id$ is $id$ itself. This implies that $C(id,\ldots,id)=X$ and obviously gives us \[HCS(id, \ldots, id) = HCS(id, \ldots, id, id) = \{\#X\},\] as required.
\qed
\renewcommand{\qed}{} 
\end{proof}

\section{Self Coincidence Number}

In this section, we introduce some special situation regarding the minimum number of coincidence points of two or more (several) maps and present some of its characterizations. Now, consider the following invariant.
\[m_j(f) = MC(f,f,\dots,f),\]
where the subscript $j$ stands for $j$ different copies of the mapping $f.$

\begin{remark}
As a special case, when $f=id$. We simply consider $m_j(id)$ to be defined and denoted as: \[m_j(X) = MC(id,id,\dots,id),\] where the subscript $j$ stands for $j$ different copies of the identity map. For instance, when $j=2$ and $j=3$ we have $m_2(X) = MC(id,id)$ and $m_3(X) = MC(id,id,id)$ respectively.
\end{remark}

Note that, if $X$ is rigid image then $m_j(X) = \# X$ for all $j,$ and when $X = C_n,$ (i.e. digital cycle of $n$ points) we have $m_j(X) = 0$ for all $j.$ Here in both cases, the sequence of numbers (i.e. $\{m_j(X)\}_{j=1}^{\infty}$) is constant.

\begin{theorem}
The sequence $\{m_j(X)\}_{j=1}^{\infty}$ is non-increasing (i.e. $m_{j+1}(X) \le m_j(X)$ for all $j \in \mathbb{N}$).
\end{theorem}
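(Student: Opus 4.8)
The plan is to reduce the statement to Theorem \ref{tinch} together with the observation, recorded in the remark just after the definition of $HCS$, that $m_j(X)$ is by construction the minimum of a homotopy coincidence point spectrum. First I would unwind the definitions: $m_j(X) = MC(id,\ldots,id)$ with $j$ copies of $id$, and $MC(id,\ldots,id) = \min HCS(id,\ldots,id)$, where $HCS(id,\ldots,id)$ is a nonempty subset of $\{0,1,\ldots,\#X\}$. It is nonempty because the tuple $(id,\ldots,id)$ itself contributes the value $\#X$, since $C(id,\ldots,id) = X$; hence all the minima in question are well defined and attained.

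Next I would apply Theorem \ref{tinch} in the special case $f_1 = \cdots = f_j = id$. This gives the inclusion $HCS(id,\ldots,id) \subseteq HCS(id,\ldots,id,id)$, the right-hand side now having $j+1$ copies of $id$. The minimum taken over a larger subset of $\{0,1,\ldots,\#X\}$ can only decrease or stay the same, so $\min HCS(id,\ldots,id,id) \le \min HCS(id,\ldots,id)$, which is precisely $m_{j+1}(X) \le m_j(X)$.

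If a self-contained argument is preferred, I would spell out the same idea directly: choose continuous maps $g_1,\ldots,g_j : X \longrightarrow X$ with $g_k \simeq id$ for each $k$ and $\#C(g_1,\ldots,g_j) = m_j(X)$ (such maps exist because $HCS(id,\ldots,id)$ is finite and nonempty). Then the $(j+1)$-tuple $(g_1,\ldots,g_j,g_j)$ again consists entirely of maps homotopic to $id$, and since the condition $g_1(x) = \cdots = g_j(x) = g_j(x)$ is identical to $g_1(x) = \cdots = g_j(x)$, we have $C(g_1,\ldots,g_j,g_j) = C(g_1,\ldots,g_j)$. Therefore $m_{j+1}(X) \le \#C(g_1,\ldots,g_j,g_j) = \#C(g_1,\ldots,g_j) = m_j(X)$, completing the proof.

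There is no genuine obstacle here; the only point needing a word of care is that the minima are taken over nonempty finite sets, which is immediate since the identity tuple always lies in the relevant spectrum and the spectrum is contained in $\{0,1,\ldots,\#X\}$. I would also note in a remark that the argument applies verbatim with $id$ replaced by an arbitrary continuous $f : X \longrightarrow X$, so that $m_{j+1}(f) \le m_j(f)$ for all $j$ as well; the monotonicity is not special to the identity map.
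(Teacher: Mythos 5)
Your proposal is correct and follows essentially the same route as the paper: the paper also reduces the monotonicity of $m_j(X)$ to the spectrum inclusions of Theorems \ref{tincc} and \ref{tinch} (noting $MC = \min HCS$), so that the minimum over the larger spectrum cannot increase. Your self-contained variant, duplicating the last map in a minimizing tuple so that $C(g_1,\ldots,g_j,g_j)=C(g_1,\ldots,g_j)$, is just the underlying argument of Theorem \ref{tinch} written out explicitly, and is in fact stated more carefully than the paper's own wording.
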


\begin{proof}
Let $m_j(X) = MC(id,id,\dots,id)$ with $j$ copies of the identity map. Then there exist some mappings $f_1,\ldots, f_j$ which are homotopic to $id$ such that $m_j(X) = \# C(f_1,\dots,f_j)$. Now, consider $m_{j+1}(X) = MC(id,id,\dots,id)$ with $j+1$ copies of the identity map. Then by Theorem \ref{tincc} and \ref{tinch} we know that the coincidence spectrum is growing larger every time and hence the coincidence set. This further implies that its minimum must be non-increasing and this completes the proof. 
\qed
\renewcommand{\qed}{} 
\end{proof}

\section{Conclusion}

In this manuscript, we generalized results regarding both the coincidence point set and the common fixed point set of any two digitally continuous maps which appeared in \cite{Abdullahietal19coincidence} to the case of multiple maps (with more than two digitally continuous mappings). Particularly, we studied the coincidence point spectrum for several maps and presented some of its characterizations, some illustrative examples were also provided to support our results. Later, we introduced and study the homotopy coincidence point spectrum for several maps between digital images. Also, we investigated whether a known result in Nielsen classical topology regarding the coincidence set for many maps as established by Staecker in \cite{Staecker11nielsen} still remains valid in the digital topological setting. However, we have shown that this theorem fails to generally be true in digital topology. In particular, we have shown that whenever an image is rigid the coincidence point set of the identity maps can never be made coincidence free by a homotopy.  Lastly, we introduced a special case of the minimum coincidence number which we called the self coincidence number, we then studied the effect of rigidity on this number.

\section{Acknowledgement}

The authors acknowledge the financial support provided by the Center of Excellence in Theoretical and Computational Science (TaCS-CoE), KMUTT. The first author was supported by the ``Petchra Pra Jom Klao Ph.D. Research Scholarship from King Mongkut's University of Technology Thonburi" (Grant No.: 35/2017). Finally, we would like to thank Dr. Peter Christoper Staecker (Fairfield University) for valuable comments, suggestions and careful reading of this manuscript.

%
%
%
%

\bibliographystyle{plain} 
\bibliography{reference}         

\vspace{0.5cm}

\end{document}